\journal{Automatica}
\newcommand{\bi}{\begin{itemize}}\newcommand{\ei}{\end{itemize}}
\newcommand{\be}{\begin{equation}}\newcommand{\ee}{\end{equation}}
\newcommand{\bes}{\begin{equation*}}\newcommand{\ees}{\end{equation*}}
\newcommand{\bb}{\begin{bmatrix}}\newcommand{\eb}{\end{bmatrix}}
\newcommand{\bee}{\begin{enumerate}}\newcommand{\eee}{\end{enumerate}}
\newcommand{\bea}{\begin{eqnarray}}\newcommand{\eea}{\end{eqnarray}}
\newcommand{\beas}{\begin{eqnarray*}}\newcommand{\eeas}{\end{eqnarray*}}
\newcommand{\bc}{\begin{center}}\newcommand{\ec}{\end{center}}
\newtheorem{theo}{Theorem}
\newtheorem{coro}{Corollary}
\newtheorem{lemm}{Lemma}
\newtheorem{propo}{Proposition}
\newtheorem{defi}{Definition}
\newtheorem{assu}{Assumption}
\newtheorem{rema}{Remark}
\newcommand{\GR}[1]{\textcolor{black}{#1}}
\newcommand{\AP}[1]{\textcolor{black}{#1}}
\begin{document}

\begin{frontmatter}

\title{Set-based state estimation for discrete-time constrained nonlinear systems: an approach based on constrained zonotopes and DC programming\tnoteref{myfootnote}}
\tnotetext[mytitlenote]{This work was partially supported by the Brazilian agencies CNPq, under grants 465755/2014-3 (INCT project), 315695/2020-0, and 306564/2020-3; FAPESP, under grant 2014/50851-0;  CAPES, under the grants 001 and 88887.136349/2017-00; FAPEMIG, under grant APQ-03090-17 and by the Italian Ministry for Research in the framework of the 2017 Program for Research Projects of National Interest (PRIN) (Grant no. 2017YKXYXJ).}



\author[myfirstaddress]{Alesi A. de Paula\corref{mycorrespondingauthor}}
\cortext[mycorrespondingauthor]{Corresponding author}
\ead{alesi@ufmg.br}

\author[mysecondaddress]{Davide M. Raimondo}
\ead{davide.raimondo@unipv.it}

\author[myfirstaddress,mythirdaddress]{Guilherme V. Raffo}
\ead{raffo@ufmg.br}

\author[myfirstaddress,mythirdaddress]{Bruno O. S. Teixeira}
\ead{brunoot@ufmg.br}

\address[myfirstaddress]{Graduate Program in Electrical Engineering, Universidade Federal de Minas Gerais, Belo Horizonte, MG, Brazil}
\address[mysecondaddress]{Department of Electrical, Computer and Biomedical Engineering, University of Pavia, Italy}
\address[mythirdaddress]{Department of Electronic Engineering, Universidade Federal de Minas Gerais, Belo Horizonte, MG, Brazil}

\begin{abstract}
This paper proposes a new state estimator for discrete-time nonlinear dynamical systems with unknown-but-bounded uncertainties and state linear inequality and nonlinear equality constraints. Our algorithm is based on constrained zonotopes (CZs) and on a DC programming approach (DC stands for difference of convex functions). Recently, mean value extension and first-order Taylor extension have been adapted from zonotopes to propagate CZs over nonlinear mappings. Although the resulting algorithms (called CZMV and CZFO) reach better precision than the original zonotopic versions, they carry the sensitivity to the wrapping and dependency effects inherited from interval arithmetic.
These interval issues can be mitigated with DC programming since the \AP{approximation error bounds are obtained solving} optimization problems. A direct benefit of this technique is the elimination of the dependency effect. 
Our set-membership filter (called CZDC) offers an alternative solution to CZMV and CZFO.
In order to demonstrate the effectiveness of the proposed approach, CZDC is experimented over two numerical examples.
\end{abstract}


\begin{keyword}
Constrained zonotopes, Set-membership filtering, Discrete-time nonlinear systems, DC programming, Nonlinear invariants, linear inequality constraints
\end{keyword}

\end{frontmatter}


\section{Introduction} \label{sec_introduction}



Set-based techniques have been investigated in the literature to solve problems involving parameter estimation 
\cite{bravo_bounded2006,bravo2008algorithm},
state estimation 
\cite{rego2021set,alesi2022CSM}, fault diagnosis 
\cite{xu2015set,hast2015detection}, control design 
\cite{bravo_robust2006,mesbah2016stochastic}, among others. In these cases, sets are used to represent unknown-but-bounded uncertainties. 
The success of set-based techniques has been exemplified by means of their use in many \GR{applications such as }
fault detection and isolation for industrial applications 
\cite{hast2015detection}, fault diagnosis for wind turbines 
\cite{tabatabaeipour_fault2012}, dynamic robot localization and mapping \cite{di2004set}, active localization of static features for mobile robots using range-only sensors \cite{grocholsky2006extensive}, vehicle state estimation \cite{ifqir2020zonotopic}, and robot-assisted dressing \cite{li2021set}.

Set-based filtering can be split in two branches: interval observers and set-membership observers \cite{pourasghar2019interval}. We here focus on the second one, whose main difference is the presence of \emph{intersection} among sets to combine forecast \AP{and} measurement sets. 
\GR{Recently}, constrained zonotopes (CZs) have motivated new advances in set membership since they can in principle represent any convex polytope efficiently. The class of CZs extends zonotopes (centrally symmetric convex polytopes) by introducing linear equality constraints. A direct gain of this extension is to propagate asymmetric polytopes, keeping the computational advantages of zonotopes.
Also, from the introduction of the generalized intersection among CZs, which can be computed exactly, the loss of precision with respect to the zonotopic intersection\GR{, which demands in general an approximation,} can be in principle eliminated.


The original paper on CZs \cite{scott2016} \AP{has considered} state estimation and fault diagnosis for state-space linear uncertain systems. The authors have shown that their state estimator reaches better precision and detection ratio over other guaranteed estimators 
\cite{chisci1996recursive,alamo_t2005} 
at the cost of a slight \GR{increase} of processing time. Motivated by these benefits, the algorithm of \cite{scott2016} has been extended to cover more general cases as in \cite{rego2022joint}. 
In particular, recent contributions for nonlinear systems have been achieved with CZs in \cite{rego2021set,rego2020guaranteed}. \cite{rego2020guaranteed} \AP{has considered} linear output equations, while \cite{rego2021set} \AP{has extended} \cite{rego2020guaranteed} to nonlinear measurement models and \AP{added} a step to enforce algebraic equations on set-based estimates. The proposed algorithms have been developed on existing methods of zonotopes, namely: mean value extension \cite{alamo_t2005} and first-order Taylor extension \cite{c_combastel2005}.

All aforementioned nonlinear methods rely on interval arithmetic to compute interval enclosures related to the approximation remainder. Therefore, the algorithms proposed in \cite{rego2021set,rego2020guaranteed} still bring up the sensitivity to the so-called wrapping and dependency effects. These effects summarize all conservatism reached by set-valued operations, with the dependency one being caused by the multioccurrence of variables, \GR{while the remaining conservatism can be caused by linear mapping and generalized intersection due to the wrapping effect} \cite{alesi2022CSM}. An alternative tool to mitigate these interval issues is the DC programming approach, where nonconvex mathematical programming problems are \AP{approximated} using convex analysis tools 
\cite{tao1997convex}.

Aiming at set-membership filtering, \cite{alamo_set2008} has proposed approximate solutions for DC programming problems where lower and upper bounds are provided to enclose the global solutions. To achieve that, the authors replace the \emph{exact} minimization and maximization problems by \emph{approximate} versions whose solutions can be obtained through the evaluation at the vertices of a convex polytope.
According to this proposal, \cite{alamo_set2008} \AP{has} modified the zonotopic filter from \cite{alamo_t2005} (based on mean value extension) to incorporate DC programming, and thereby, mitigated the conservatism caused by the wrapping and dependency effects.
The methodology of \cite{alamo_set2008} differs from interval arithmetic in many senses, among them the range of nonlinear functions is determined by using convex components and real-valued operations rather than \GR{approximate} functions and set-valued operations.

Motivated by the benefits of DC programming over interval arithmetic and of CZs over zonotopes, we here propose a new set-membership filter called CZDC. Unlike \cite{alamo_set2008}, our algorithm allows for working on asymmetric polytopes, mitigating the conservatism generated by zonotopes over intersection, and \AP{allows to account for} state linear inequality and nonlinear equality constraints. 
Such state constraints are present in many real applications such as compartmental systems (nonnegativity, conservation laws) \cite{teixeira2009state}, unit-quaternion representation (holonomic constraints) \cite{rego2021set}, and water distribution networks (physical constraints, static relations) \cite{wang2017robust}.

This paper is organized as follows. Section \ref{sec_problem} formulates the state-estimation problem involving nonlinear state-space models with state constraints. Section \ref{sec_preliminaries} introduces some preliminary results. Section \ref{sec_CZDC} presents the CZDC algorithm in detail. Afterwards, CZDC is executed and compared to the algorithms of \cite{rego2021set} over two numerical examples in Section \ref{sec_numerical_examples}. Section \ref{sec_conclusion} presents the concluding remarks.

\section*{Notation}
\AP{
The set of natural numbers is denoted as $\mathbb{N}$.
The set of positive integer numbers is denoted as $\mathbb{Z}_+$.
The set of real numbers is denoted as $\mathbb{R}$.
An $(n \times 1)$-dimensional vector and an $(n \times m)$-dimensional matrix are, respectively, denoted as $b \in \mathbb{R}^n$ and $A \in \mathbb{R}^{n \times m}$. 
An $(n \times m)$-dimensional zero matrix and an $(n \times n)$-dimensional identity matrix are, respectively, denoted as $0_{n \times m}$ and ${\rm I}_n$.
The transpose of a matrix and the diagonal matrix obtained from a vector are, respectively, denoted as $(\cdot)^{\top}$ and ${\rm diag}(\cdot)$.
The $i$th row of a matrix is denoted as $(\cdot)_{i,:}$.}

\section{Problem Statement} \label{sec_problem}

Consider the discrete-time nonlinear dynamical system
\begin{align}
\label{eq_process_model}
x_k &= f\left(x_{k-1},u_{k-1},w_{k-1}\right),\\
\label{eq_measurement_model}
y_k &= h\left(x_k,v_k\right),
\end{align}
where $f:\mathbb{R}^n \times \mathbb{R}^p \times \mathbb{R}^q \to \mathbb{R}^n$ and $h:\mathbb{R}^n \times \mathbb{R}^r \to \mathbb{R}^m$ are the known process dynamics and measurement equations, respectively, $u_{k-1} \in \mathbb{R}^p$ is the known deterministic input vector, $y_k \in \mathbb{R}^m$ is the measured output vector, and $x_k \in \mathbb{R}^n$ is the state vector to be estimated. We assume that $x_k$ satisfies the following nonlinear equality and linear inequality constraints:
\begin{align}
\label{eq_equality_constraints}
g\left(x_k\right) &= 0_{m_c \times 1}, \\
\label{eq_inequality_constraints}
D_kx_k &\le d_k,
\end{align}
where $g: \mathbb{R}^n \to \mathbb{R}^{m_c}$, $D_k \in \mathbb{R}^{n_c \times n}$, and $d_k \in \mathbb{R}^{n_c}$.
Regarding \eqref{eq_inequality_constraints}, we make the following assumption to enable the direct use of convex polytopes.

\begin{assu}
The inequality constraints given by \eqref{eq_inequality_constraints}, \AP{if present,} define a compact feasibility set $\mathcal{X}^{\rm F}_k \subset \mathbb{R}^n$. 
\end{assu}

The process noise $w_{k-1} \in \mathbb{R}^q$, the measurement noise $v_k \in \mathbb{R}^r$, and the initial state $x_0 \in \mathbb{R}^n$ are bounded by convex polytopes $\mathcal{W}_{k-1}$, $\mathcal{V}_k$, and $\mathcal{X}_0$. Our set-membership filter aims at estimating the state vector $x_k$ through convex polytopes $\mathcal{X}_k$ over $k \in \mathbb{Z}_+$. To achieve this goal \GR{at each $k$}, \AP{given $u_{k-1}$,} we define five steps as follows:
\begin{enumerate}
\item {\em Forecast}:\\
$\mathcal{X}_{k|k-1} \supseteq \left\{f\left(x_{k-1},u_{k-1},w_{k-1}\right):x_{k-1} \in \mathcal{X}_{k-1},w_{k-1} \in \mathcal{W}_{k-1}\right\}$;
\item {\em Data assimilation}:\\
$\breve{\mathcal{X}}_k \supseteq \left\{x_k \in \mathcal{X}_{k|k-1}: h\left(x_k,v_k\right) = y_k,v_k \in \mathcal{V}_k\right\}$;
\item {\em Admissibility}: $\check{\mathcal{X}}_k = \breve{\mathcal{X}}_k \cap \mathcal{X}^{\rm F}_k$;
\item {\em Consistency}: $\tilde{\mathcal{X}}_k \supseteq \left\{x_k \in \check{\mathcal{X}}_k: g\left(x_k\right) = 0_{m_c \times 1}\right\}$;
\item {\em Reduction}: $\mathcal{X}_k \supset \tilde{\mathcal{X}}_k$, with $\mathcal{X}_k$ being a set with lower complexity than $\tilde{\mathcal{X}}_k$.
\end{enumerate}

Steps 1, 2, and 4 will be supported by a DC programming approach to obtain tight solutions. In steps 3 and 4, the state nonlinear equality and linear inequality constraints given by \eqref{eq_equality_constraints}-\eqref{eq_inequality_constraints} are enforced on the estimator.
Step 5 corresponds to a complexity reduction for convex polytopes, which is necessary to control the demand of computational resources; see Section \ref{sec_preliminaries} for further details.
\section{Preliminaries} \label{sec_preliminaries}


\subsection{Constrained Zonotopes}

A {\em constrained zonotope} $\mathcal{X} \subset \mathbb{R}^n$ is a convex polytope represented by the generator matrix $G^{\rm x} \in \mathbb{R}^{n \times n_g}$, the center $c^{\rm x} \in \mathbb{R}^n$, and the linear equality constraints given by matrix $A^{\rm x} \in \mathbb{R}^{n_h \times n_g}$ and vector $b^{\rm x} \in \mathbb{R}^{n_h}$. 
The terms $n_g$ and $n_h$ refer to the number of generators and constraints, respectively.
Let $\mathcal{B}^{n_g} \triangleq [-1,1]^{n_g}$ be the unitary box of dimension $n_g$ and let $\mathcal{B}\left(A^{\rm x},b^{\rm x}\right) \triangleq \left\{\xi \in \mathcal{B}^{n_g}:A^{\rm x}\xi = b^{\rm x}\right\}$ be the constrained unitary box. Then, a CZ
is defined as \cite{scott2016}
\be
\label{eq_defi_cz}
\mathcal{X} \triangleq \left\{G^{\rm x},c^{\rm x},A^{\rm x},b^{\rm x}\right\} = \left\{G^{\rm x}\xi + c^{\rm x}: \xi \in \mathcal{B}\left(A^{\rm x},b^{\rm x}\right)\right\}.
\ee
For zonotopes, there are no equality constraints given by $A^{\rm x}$ and $b^{\rm x}$. In this case, we abbreviate the notation to $\mathcal{X} = \left\{G^{\rm x},c^{\rm x}\right\}$.

Let $m \in \mathbb{R}^r$, $L \in \mathbb{R}^{r \times n}$, $\mathcal{Y} = \left\{G^{\rm y},c^{\rm y},A^{\rm y},b^{\rm y}\right\} \subset \mathbb{R}^n$, $\mathcal{W} = \left\{G^{\rm w},c^{\rm w},A^{\rm w},b^{\rm w}\right\} \subset \mathbb{R}^p$, and $M \in \mathbb{R}^{p \times n}$.
The affine transformation, Minkowski sum, generalized intersection, and Cartesian product of CZs are {\em explicitly} computed as, respectively,
\begin{align}
\label{eq_aff_transf_cz}
L\mathcal{X} \oplus m &= \left\{ LG^{\rm x} , \left(Lc^{\rm x} + m\right), A^{\rm x}, b^{\rm x} \right\},
\\
\label{eq_sum_cz}
\mathcal{X} \oplus \mathcal{Y} &= 
\left\{\bb G^{\rm x} & G^{\rm y} \eb ,  \left(c^{\rm x} + c^{\rm y}\right), 
\bb A^{\rm x} & 0_{n_h \times n^{\rm y}_g} \\ 0_{n^{\rm y}_h \times n_g} & A^{\rm y} \eb,
\bb b^{\rm x} \\ b^{\rm y} \eb \right\},
\\
\label{eq_intersection_cz}
\mathcal{X} \cap_M \mathcal{W} &= 
\left\{\bb G^{\rm x} & 0_{n \times n^{\rm w}_g} \eb ,
c^{\rm x},
\bb A^{\rm x} & 0_{n_h \times n^{\rm w}_g} \\ 0_{n^{\rm w}_h \times n_g} & A^{\rm w} \\ MG^{\rm x} & -G^{\rm w} \eb,
\bb b^{\rm x} \\ b^{\rm w} \\ c^{\rm w} - Mc^{\rm x} \eb \right\},\\
\label{eq_cart_prod_cz}
\mathcal{X} \times \mathcal{W} &= 
\left\{\bb G^{\rm x} & 0_{n \times n^{\rm w}_g} \\ 0_{n^{\rm w} \times n_g} & G^{\rm w} \eb, 
\bb c^{\rm x} \\ c^{\rm w} \eb, 
\bb A^{\rm x} & 0_{n_h \times n^{\rm w}_g} \\ 0_{n^{\rm w}_h \times n_g} & A^{\rm w} \eb, \bb b^{\rm x} \\ b^{\rm w} \eb \right\}.
\end{align}

For the set operations \eqref{eq_sum_cz}-\eqref{eq_cart_prod_cz}, the number of constraints $n_h$ and generators $n_g$ for CZs increases. Recursively, this dimension growth demands an algorithm to reduce $n_h$ and $n_g$ to desired values $\varphi_c$ and $\varphi_g$, thus keeping complexity limited at the price of conservativeness (outer approximation). 
Here, we employ the algorithm proposed in \cite{scott2016}, which can be summarized in four steps: (i) rescaling; (ii) preconditioning; (iii) elimination of constraints and (partial) generators; and (iv) final elimination of generators. The first three steps lead to reduction from $n_h$ to $\varphi_c$ and from $n_g$ to $(n_g - \varphi_c)$, while the latter finishes the reduction from $(n_g - \varphi_c)$ to $\varphi_g$.

The following result is used to obtain the so-called {\em interval hull} of a CZ $\mathcal{X}$, $\Box \mathcal{X} = \left[\zeta^{\rm L},\zeta^{\rm U}\right]$, such that $\mathcal{X} \subseteq \Box \mathcal{X}$.

\begin{propo}[\cite{rego2020guaranteed}]\label{propo_int_hull_cz}
Let $\mathcal{X} = \left\{G^{\rm x},c^{\rm x},A^{\rm x},b^{\rm x}\right\} \subset \mathbb{R}^n$.
The interval hull $\left[\zeta^{\rm L},\zeta^{\rm U}\right] \supseteq \mathcal{X}$ is obtained by solving linear programs for each $i = 1,\ldots,n$:
\begin{align*}
\zeta^{\rm L}_i &\triangleq \min_{\xi} \left\{G^{\rm x}_{i,:}\xi + c^{\rm x}_i : \xi \in \mathcal{B}\left(A^{\rm x},b^{\rm x}\right)\right\},\ i = 1,\ldots,n,\\
\zeta^{\rm U}_i &\triangleq \max_{\xi} \left\{G^{\rm x}_{i,:}\xi + c^{\rm x}_i : \xi \in \mathcal{B}\left(A^{\rm x},b^{\rm x}\right)\right\},\ i = 1,\ldots,n.
\end{align*}
\end{propo}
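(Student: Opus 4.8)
The plan is to unwind the definition of the interval hull into a coordinatewise optimization and then to recognize each coordinate problem as a linear program. By definition, the interval hull $[\zeta^{\rm L},\zeta^{\rm U}]$ is the smallest axis-aligned box containing $\mathcal{X}$; equivalently, its bounds are obtained coordinatewise as $\zeta^{\rm L}_i = \min_{x \in \mathcal{X}} x_i$ and $\zeta^{\rm U}_i = \max_{x \in \mathcal{X}} x_i$ for each $i = 1,\ldots,n$. This equivalence is what I would establish first: the box $\prod_{i=1}^{n}\bigl[\min_{x\in\mathcal{X}} x_i,\ \max_{x\in\mathcal{X}} x_i\bigr]$ clearly contains $\mathcal{X}$, while any box $[\ell,u] \supseteq \mathcal{X}$ must satisfy $\ell_i \le x_i \le u_i$ for all $x \in \mathcal{X}$, hence $\ell_i \le \min_{x\in\mathcal{X}} x_i$ and $u_i \ge \max_{x\in\mathcal{X}} x_i$, which proves minimality.

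Next I would substitute the generator representation from \eqref{eq_defi_cz}. Since $\mathcal{X} = \{G^{\rm x}\xi + c^{\rm x} : \xi \in \mathcal{B}(A^{\rm x},b^{\rm x})\}$, every $x \in \mathcal{X}$ has its $i$th coordinate given by $x_i = G^{\rm x}_{i,:}\xi + c^{\rm x}_i$ for some feasible $\xi$, and conversely each feasible $\xi$ yields such a point. Therefore the coordinatewise extrema over $\mathcal{X}$ coincide exactly with the extrema of the affine functional $\xi \mapsto G^{\rm x}_{i,:}\xi + c^{\rm x}_i$ over the constrained unitary box $\mathcal{B}(A^{\rm x},b^{\rm x})$, which are precisely the programs stated in the proposition.

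Finally I would argue that these are genuine linear programs with attained optima. The feasible set $\mathcal{B}(A^{\rm x},b^{\rm x}) = \{\xi \in [-1,1]^{n_g} : A^{\rm x}\xi = b^{\rm x}\}$ is the intersection of the compact box $[-1,1]^{n_g}$ with an affine subspace, hence a closed and bounded, therefore compact, convex polytope; the objective $G^{\rm x}_{i,:}\xi + c^{\rm x}_i$ is affine in $\xi$. By the extreme value theorem the infimum and supremum are attained, so writing $\min$ and $\max$ rather than $\inf$ and $\sup$ is justified, and each problem fits the standard LP form.

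The main subtlety, rather than a genuine obstacle, lies in the minimality argument of the first step together with attainment of the extrema via compactness; both are elementary but rely on nonemptiness and boundedness of $\mathcal{B}(A^{\rm x},b^{\rm x})$, which hold because $\mathcal{X}$ is a nonempty constrained zonotope. No deeper difficulty arises, since the generator map is affine and the constraint set is polyhedral.
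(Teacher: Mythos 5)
Your proof is correct and follows the canonical argument: the paper itself states this proposition as a cited result from the reference and gives no proof, but your derivation (coordinatewise extrema define the interval hull, the generator representation turns each into an affine objective over the compact polytope $\mathcal{B}\left(A^{\rm x},b^{\rm x}\right)$, and attainment follows from compactness) is exactly the standard justification. No gaps.
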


As any box expressed in interval arithmetic, the interval hull of a CZ can be equivalently expressed in affine arithmetic doing $\Box \mathcal{X} = \left\{{\rm diag}\left({\rm rad}\left(\Box \mathcal{X}\right)\right),{\rm mid}\left(\Box \mathcal{X}\right)\right\}$, where ${\rm rad}\left(\Box \mathcal{X}\right) \triangleq \frac{1}{2}\left(\zeta^{\rm U} - \zeta^{\rm L}\right)$ and ${\rm mid}\left(\Box \mathcal{X}\right) \triangleq \frac{1}{2}\left(\zeta^{\rm L} + \zeta^{\rm U}\right)$. For interval matrices $[M] = \left\{M \in \mathbb{R}^{n \times m}: M^{\rm L} \le M \le M^{\rm U}\right\}$, we have ${\rm rad}([M]) = \frac{1}{2}\left(M^{\rm U} - M^{\rm L}\right)$ and ${\rm mid}([M]) = \frac{1}{2}\left(M^{\rm L} + M^{\rm U}\right)$, with $M^{\rm L}$ and $M^{\rm U}$ being known matrices with different values $M^{\rm L}_{i,j}$ and $M^{\rm U}_{i,j}$, respectively, for $i = 1,\ldots,n$ and $j = 1,\ldots,m$.

\subsection{DC Programming}

As shown in \cite{rego2021set}, \eqref{eq_aff_transf_cz}-\eqref{eq_intersection_cz} can be directly employed in state estimation when $f$, $h$, and $g$ given by \eqref{eq_process_model}, \eqref{eq_measurement_model}, and \eqref{eq_equality_constraints} are linear. Conversely, the nonlinear case requires some approximation of such functions to enable the use of \eqref{eq_aff_transf_cz}-\eqref{eq_intersection_cz}. Contributions to this topic have been proposed in \cite[Lemmas 1 and 2]{rego2021set} using CZs.

In this paper, a DC programming approach is used to compute linearization enclosures. This approach is convenient to reduce conservatism in comparison with interval methods based on Lagrange remainder as those proposed in \cite{c_combastel2005,althoff2008reachability}, which concentrate the linearization error in the quadratic term of a truncated Taylor series. For an in-depth reading about DC programming, the reader is referred to \cite{tao1997convex,horst1999dc}. Next, we define a DC function.

\begin{defi}[\cite{alamo_set2008}]\label{defi_dc_function}
Consider a polytope $\mathcal{P} \subset \mathbb{R}^n$ and a function $\varrho:\mathbb{R}^n \to \mathbb{R}^m$. If $\varrho$ can be rewritten as the difference between two convex functions $\varrho^{\rm a}$ and $\varrho^{\rm b}$ in $\mathcal{P}$, then, $\varrho$ is called DC on $\mathcal{P}$.
\end{defi}


The determination of the convex functions $\varrho^{\rm a}$ and $\varrho^{\rm b}$ may not be a trivial task. Some procedures to guide the choice of such functions are resumed in \cite{alamo_set2008}.
Next, we define the general form of a DC programming problem. After, two results are presented to enclose the global solutions of DC programming in intervals.


\begin{defi}[\cite{alamo_set2008}]\label{defi_dc_prog_problem}
Consider that the function $\varrho : \mathbb{R}^n \to \mathbb{R}^m$ is DC on $\mathcal{P} \subset \mathbb{R}^n$, with $\varrho^{\rm a}$ and $\varrho^{\rm b}$ being its DC components such that $\varrho(z) = \varrho^{\rm a}(z) - \varrho^{\rm b}(z)$.
Then, for each component $i = 1,\ldots,m$, the $i$th DC programming \AP{problems are} formulated as
\be
\label{eq_gen_prob_DC}
\min_{z \in \mathcal{P}} \varrho_i(z),~ \max_{z \in \mathcal{P}} \varrho_i(z).
\ee
\end{defi}

\begin{defi}[\cite{alamo_set2008}] \label{defi_linear_minorant}
Let $\varrho(z) = \left(\varrho^{\rm a}(z) - \varrho^{\rm b}(z)\right) \in \mathbb{R}^m$ be DC on $\mathcal{P} \subset \mathbb{R}^n$. Then, the linear minorant of $\varrho^{\rm s}$, with ${\rm s = \{a,b\}}$, is defined as
\be
\bar{\varrho}^{\rm s}(z) \triangleq \varrho^{\rm s}(\bar{z}) + F^{\rm s}(z - \bar{z}),
\ee
where
\be
F^{\rm s} \triangleq \nabla_z\varrho^{\rm s}(\bar{z}) = \bb 
\frac{\partial \varrho^{\rm s}_1}{\partial z_1} & \cdots & \frac{\partial \varrho^{\rm s}_1}{\partial z_n} \\
\vdots & \ddots & \vdots \\
\frac{\partial \varrho^{\rm s}_m}{\partial z_1} & \cdots & \frac{\partial \varrho^{\rm s}_m}{\partial z_n}
\eb \Bigg|_{\bar{z}}
\ee
is the Jacobian matrix 
evaluated at some $\bar{z} \in \mathcal{P}$. The term \emph{minorant} comes from the convexity of $\varrho^{\rm s}$ that implies the inequalities $\varrho^{\rm s}(z) \ge \bar{\varrho}^{\rm s}(z)$, $\forall z \in \mathcal{P}$.
\end{defi}


\begin{propo}[Adapted from \cite{alamo_set2008}]\label{propo_DC_prog_prob}
Let $\varrho(z) = \left(\varrho^{\rm a}(z) - \varrho^{\rm b}(z)\right) \in \mathbb{R}^m$ be a DC function on the polytope $\mathcal{P} \subset \mathbb{R}^n$. Then, according to Definition \ref{defi_linear_minorant}, the following inequalities hold:
\begin{align}
\label{eq_final_form_DC_min}
\min_{z \in \mathcal{P}} \varrho_i(z) &\ge \min_{z \in {\rm vert}(\mathcal{P})} \bar{\varrho}^{\rm a}_i(z) - \varrho^{\rm b}_i(z),\\
\label{eq_final_form_DC_max}
\max_{z \in \mathcal{P}} \varrho_i(z) &\le \max_{z \in {\rm vert}(\mathcal{P})} \varrho^{\rm a}_i(z) - \bar{\varrho}^{\rm b}_i(z),
\end{align}
for $i = 1,\ldots,m$, with ${\rm vert}(\mathcal{P})$ being the set of vertices of $\mathcal{P}$, with $\bar{\varrho}^{\rm a}_i(z) - \varrho^{\rm b}_i(z)$ being a concave function, and with $\varrho^{\rm a}_i(z) - \bar{\varrho}^{\rm b}_i(z)$ being a convex function.
\end{propo}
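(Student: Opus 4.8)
The plan is to prove each inequality in two stages: first replace one of the two DC components by its linear minorant so as to bound $\varrho_i$ \emph{pointwise} on all of $\mathcal{P}$ by a concave (respectively convex) surrogate, and then exploit that concavity (respectively convexity) to collapse the optimization over $\mathcal{P}$ to an optimization over ${\rm vert}(\mathcal{P})$.

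For the lower bound \eqref{eq_final_form_DC_min}, I would start from the minorant property recorded in Definition \ref{defi_linear_minorant}: since $\varrho^{\rm a}$ is convex, $\varrho^{\rm a}_i(z) \ge \bar{\varrho}^{\rm a}_i(z)$ for every $z \in \mathcal{P}$. Subtracting $\varrho^{\rm b}_i(z)$ from both sides gives $\varrho_i(z) \ge \bar{\varrho}^{\rm a}_i(z) - \varrho^{\rm b}_i(z)$ on $\mathcal{P}$, hence $\min_{z \in \mathcal{P}} \varrho_i(z) \ge \min_{z \in \mathcal{P}} [\bar{\varrho}^{\rm a}_i(z) - \varrho^{\rm b}_i(z)]$. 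The key observation is that the surrogate $\bar{\varrho}^{\rm a}_i - \varrho^{\rm b}_i$ is concave: $\bar{\varrho}^{\rm a}_i$ is affine by its definition as a first-order term evaluated at $\bar{z}$, hence concave, and $-\varrho^{\rm b}_i$ is concave because $\varrho^{\rm b}_i$ is convex, so their sum is concave. Invoking the fact that a concave function attains its minimum over a compact polytope at a vertex then turns the minimization over $\mathcal{P}$ into one over ${\rm vert}(\mathcal{P})$, yielding \eqref{eq_final_form_DC_min}.

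The upper bound \eqref{eq_final_form_DC_max} follows by the dual argument, applying the minorant property to the second component instead: convexity of $\varrho^{\rm b}$ gives $\varrho^{\rm b}_i(z) \ge \bar{\varrho}^{\rm b}_i(z)$, so $-\varrho^{\rm b}_i(z) \le -\bar{\varrho}^{\rm b}_i(z)$ and consequently $\varrho_i(z) \le \varrho^{\rm a}_i(z) - \bar{\varrho}^{\rm b}_i(z)$ for all $z \in \mathcal{P}$, whence $\max_{z \in \mathcal{P}} \varrho_i(z) \le \max_{z \in \mathcal{P}} [\varrho^{\rm a}_i(z) - \bar{\varrho}^{\rm b}_i(z)]$. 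The surrogate $\varrho^{\rm a}_i - \bar{\varrho}^{\rm b}_i$ is now convex, being the sum of the convex $\varrho^{\rm a}_i$ and the affine $-\bar{\varrho}^{\rm b}_i$, and a convex function attains its maximum over a compact polytope at a vertex; restricting the maximization to ${\rm vert}(\mathcal{P})$ delivers \eqref{eq_final_form_DC_max}. Note that the concavity and convexity assertions appended to the statement are precisely the facts verified in these two steps, so they require no separate argument.

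The pointwise minorant inequality is the routine part, immediate from Definition \ref{defi_linear_minorant}. The main obstacle I anticipate is the extremal principle itself, that a concave (respectively convex) function is minimized (respectively maximized) over a polytope at a vertex. This rests on compactness of $\mathcal{P}$, together with the representation of every $z \in \mathcal{P}$ as a convex combination of its vertices and an application of Jensen's inequality to the concave (respectively convex) surrogate. I would state this principle explicitly and check that the appended concavity/convexity claims supply exactly its hypotheses, so that no convexity verification beyond the minorant construction is needed.
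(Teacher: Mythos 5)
Your proof is correct: the pointwise gradient inequality for the convex components, the observation that an affine function minus a convex one is concave (and a convex one minus an affine one is convex), and the vertex-extremality of concave minima and convex maxima over a compact polytope are exactly the ingredients needed, and you assemble them in the right order. The paper itself states Proposition \ref{propo_DC_prog_prob} without proof, deferring to \cite{alamo_set2008}; your argument is the standard one underlying that reference, so there is nothing to add or correct.
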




According to Proposition \ref{propo_DC_prog_prob}, the optimization problems are influenced by two factors, namely: (i) polytope $\mathcal{P}$; and (ii) convex functions $\varrho^{\rm a}$ and $\varrho^{\rm b}$ in $\mathcal{P}$. Regarding (i), the vertices of a polytope are individually evaluated over \eqref{eq_final_form_DC_min}-\eqref{eq_final_form_DC_max}. 
In order to control the number of vertices, and thereby, the computational cost associated to \eqref{eq_final_form_DC_min}-\eqref{eq_final_form_DC_max}, we outer approximate the polytope $\mathcal{P}$ by either its interval hull $\Box \mathcal{P} \supseteq \mathcal{P}$ or a parallelotope $\mathcal{T} \supseteq \mathcal{P}$, since these representations involve $2^n$ vertices. 
To obtain the vertices of both box $\Box \mathcal{P}$ and parallelotope $\mathcal{T}$, we can employ specific algorithms as those presented in \cite{dyer1983complexity}. 
Polytopes are here represented by CZs. Therefore, the interval hull $\Box \mathcal{P}$ is obtained with Proposition \ref{propo_int_hull_cz}. In order to obtain a tight parallelotope $\mathcal{T}$ to polytope $\mathcal{P}$, we next propose a new result, in which a candidate parallelotope $\mathcal{C} \supseteq \mathcal{P}$ is tightened via linear programs. 
The set $\mathcal{C}$ is here computed as in Assumption \ref{assu_cand_zon_from_cz}.

\begin{assu}\label{assu_cand_zon_from_cz}
Given a CZ $\mathcal{P}$, a parallelotope $\mathcal{C} \supseteq \mathcal{P}$ is obtained by reducing all constraints and generators of $\mathcal{P}$ with \AP{Method 4 of \cite{yang2018comparison}}.
\end{assu}

\begin{propo}\label{propo_partope}
Let $\mathcal{C} = \left\{G^{\rm c},c^{\rm c}\right\} \subset \mathbb{R}^n$ be a parallelotope containing the CZ $\mathcal{P} = \left\{G^{\rm z},c^{\rm z},A^{\rm z},b^{\rm z}\right\} \subset \mathbb{R}^n$.
By solving the linear programs
\begin{align*}
\zeta^{\rm L}_i &= \min_{\xi^{\rm c},\xi^{\rm z}} \left\{\xi^{\rm c}_i : G^{\rm c}_{i,:}\xi^{\rm c} + c^{\rm c}_i = G^{\rm z}_{i,:}\xi^{\rm z} + c^{\rm z}_i , \xi^{\rm c} \in \mathcal{B}^n , \xi^{\rm z} \in \mathcal{B}\left(A^{\rm z},b^{\rm z}\right)\right\},\\
\zeta^{\rm U}_i &= \max_{\xi^{\rm c},\xi^{\rm z}} \left\{\xi^{\rm c}_i : G^{\rm c}_{i,:}\xi^{\rm c} + c^{\rm c}_i = G^{\rm z}_{i,:}\xi^{\rm z} + c^{\rm z}_i , \xi^{\rm c} \in \mathcal{B}^n , \xi^{\rm z} \in \mathcal{B}\left(A^{\rm z},b^{\rm z}\right)\right\},
\end{align*}
for $i = 1,\ldots,n$, we obtain the optimal parallelotope $\mathcal{T} = \left\{G^{\rm c}{\rm diag}\left({\rm rad}\left(\left[\zeta^{\rm L},\zeta^{\rm U}\right]\right)\right) , c^{\rm c} + G^{\rm c}{\rm mid}\left(\left[\zeta^{\rm L},\zeta^{\rm U}\right]\right)\right\} \supseteq \mathcal{P}$.
\end{propo}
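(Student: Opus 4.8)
The plan is to read the two families of linear programs as computing the exact coordinate-wise range of the points of $\mathcal{P}$ expressed in the ``box coordinates'' induced by the parallelotope $\mathcal{C}$, and then to recognize the claimed $\mathcal{T}$ as the image of that range-box under the affine map defining $\mathcal{C}$. First I would attach to $\mathcal{C} = \left\{G^{\rm c},c^{\rm c}\right\}$ the change of variables $x = G^{\rm c}\xi^{\rm c} + c^{\rm c}$, so that a point $x$ lies in $\mathcal{C}$ exactly when it admits box coordinates $\xi^{\rm c} \in \mathcal{B}^n$. The feasibility constraints of each program force $G^{\rm c}\xi^{\rm c} + c^{\rm c}$ to coincide with a genuine point $G^{\rm z}\xi^{\rm z} + c^{\rm z}$ of $\mathcal{P}$ (with $\xi^{\rm z} \in \mathcal{B}\left(A^{\rm z},b^{\rm z}\right)$ per \eqref{eq_defi_cz}), while $\xi^{\rm c} \in \mathcal{B}^n$ keeps these coordinates inside $\mathcal{C}$; minimizing and maximizing $\xi^{\rm c}_i$ then returns the extreme values $\zeta^{\rm L}_i$ and $\zeta^{\rm U}_i$ of the $i$th box coordinate attained over $\mathcal{P}$. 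Since $\mathcal{P}$ is a nonempty compact set contained in $\mathcal{C}$, each program is feasible and bounded, so the range-box $\left[\zeta^{\rm L},\zeta^{\rm U}\right] \subseteq \mathcal{B}^n$ is well defined.

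Next I would identify $\mathcal{T}$ in closed form. Writing $\left[\zeta^{\rm L},\zeta^{\rm U}\right]$ in affine-arithmetic form as the CZ $\left\{{\rm diag}\left({\rm rad}\left(\left[\zeta^{\rm L},\zeta^{\rm U}\right]\right)\right),{\rm mid}\left(\left[\zeta^{\rm L},\zeta^{\rm U}\right]\right)\right\}$ (as recalled after Proposition \ref{propo_int_hull_cz}) and then applying the affine transformation $\xi \mapsto G^{\rm c}\xi + c^{\rm c}$ through the rule \eqref{eq_aff_transf_cz} yields precisely $\left\{G^{\rm c}{\rm diag}\left({\rm rad}\left(\left[\zeta^{\rm L},\zeta^{\rm U}\right]\right)\right),c^{\rm c}+G^{\rm c}{\rm mid}\left(\left[\zeta^{\rm L},\zeta^{\rm U}\right]\right)\right\}$, which is the stated $\mathcal{T}$. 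Equivalently, $x \in \mathcal{T}$ if and only if there is $\eta \in \left[\zeta^{\rm L},\zeta^{\rm U}\right]$ with $x = G^{\rm c}\eta + c^{\rm c}$, a characterization I will use for the inclusion.

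The core step is the inclusion $\mathcal{P} \subseteq \mathcal{T}$. I would take an arbitrary $p \in \mathcal{P}$, represent it as $p = G^{\rm z}\xi^{\rm z}_p + c^{\rm z}$ with $\xi^{\rm z}_p \in \mathcal{B}\left(A^{\rm z},b^{\rm z}\right)$, and, using $\mathcal{P} \subseteq \mathcal{C}$, also as $p = G^{\rm c}\xi^{\rm c}_p + c^{\rm c}$ with $\xi^{\rm c}_p \in \mathcal{B}^n$. The pair $\left(\xi^{\rm c}_p,\xi^{\rm z}_p\right)$ is feasible for every one of the programs, whence $\zeta^{\rm L}_i \le \left(\xi^{\rm c}_p\right)_i \le \zeta^{\rm U}_i$ for all $i$, i.e. $\xi^{\rm c}_p \in \left[\zeta^{\rm L},\zeta^{\rm U}\right]$. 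Taking $\eta = \xi^{\rm c}_p$ in the characterization of $\mathcal{T}$ gives $p \in \mathcal{T}$, and since $p$ was arbitrary, $\mathcal{P} \subseteq \mathcal{T}$.

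The bookkeeping I expect to require the most care is the closed-form identification of $\mathcal{T}$: composing the affine-arithmetic rewriting of the interval hull with the generator map $G^{\rm c}$ and checking that center and generators land exactly as claimed. A secondary point worth isolating is the status of $G^{\rm c}$: the inclusion above needs only that every point of $\mathcal{P}$ admits box coordinates in $\mathcal{B}^n$ (guaranteed by $\mathcal{P} \subseteq \mathcal{C}$) and therefore holds irrespective of whether $G^{\rm c}$ is invertible, whereas the \emph{optimality} of $\mathcal{T}$ — its being the tightest parallelotope sharing the generator directions of $\mathcal{C}$ — relies on $G^{\rm c}$ being nonsingular, so that the box coordinates of each $p$ are unique and $\left[\zeta^{\rm L},\zeta^{\rm U}\right]$ is the exact projection of $\mathcal{P}$ onto those axes.
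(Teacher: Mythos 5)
Your proof is correct and follows essentially the same route as the paper's: the paper's own proof is a terse description of the construction, and your explicit point-wise argument (any $p \in \mathcal{P} \subseteq \mathcal{C}$ admits box coordinates $\xi^{\rm c}_p \in \mathcal{B}^n$ and CZ coordinates $\xi^{\rm z}_p \in \mathcal{B}\left(A^{\rm z},b^{\rm z}\right)$ that together are feasible for every program, hence $\xi^{\rm c}_p \in \left[\zeta^{\rm L},\zeta^{\rm U}\right]$ and $p \in \mathcal{T}$) is exactly the step the paper leaves implicit. One correction to your reading of the programs: the equality constraint in the $i$th program is only the scalar $i$th-row equation $G^{\rm c}_{i,:}\xi^{\rm c} + c^{\rm c}_i = G^{\rm z}_{i,:}\xi^{\rm z} + c^{\rm z}_i$, not the full vector identity $G^{\rm c}\xi^{\rm c} + c^{\rm c} = G^{\rm z}\xi^{\rm z} + c^{\rm z}$ that you assume. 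This only enlarges the feasible set, so your inclusion argument survives unchanged (the witness pair $\left(\xi^{\rm c}_p,\xi^{\rm z}_p\right)$ remains feasible), but $\left[\zeta^{\rm L},\zeta^{\rm U}\right]$ is then merely an outer bound on, not the exact value of, the coordinate-wise range of $\mathcal{P}$ in the box coordinates of $\mathcal{C}$; your closing remark that it is the exact projection (and the corresponding sharpest optimality claim, even for nonsingular $G^{\rm c}$) should be weakened accordingly.
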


\begin{proof}
Given the candidate parallelotope $\mathcal{C} \supseteq \mathcal{P}$, a new parallelotope $\mathcal{T}$ is investigated to minimally contain the CZ $\mathcal{P}$. By fixing both the generator matrix $G^{\rm c}$ and the center $c^{\rm c}$ of $\mathcal{C}$, the slack variables to be manipulated are $\xi^{\rm c} \in \mathcal{B}^n$. Aiming at tightening the facets of $\mathcal{C}$ onto the CZ $\mathcal{P}$, $2n$ linear programs are formulated enforcing the $i$th linear equality constraint $G^{\rm c}_{i,:}\xi^{\rm c} + c^{\rm c}_i = G^{\rm z}_{i,:}\xi^{\rm z} + c^{\rm z}_i$, where $\xi^{\rm z} \in \mathcal{B}\left(A^{\rm z},b^{\rm z}\right)$, such that the minimization and maximization of $\xi^{\rm c}_i$ yield the smallest possible box $\left[\zeta^{\rm L},\zeta^{\rm U}\right] \subset \mathcal{B}^n$. Then, $\mathcal{T} = G^{\rm c}\left[\zeta^{\rm L},\zeta^{\rm U}\right] \oplus c^{\rm c}$ is the optimal parallelotopic outer approximation of $\mathcal{P}$, which can be rewritten as a zonotope using a rescaling \AP{(see \GR{$\mathcal{T}$} in the statement of Proposition \ref{propo_partope})}.
\end{proof}


The choice (ii) of the DC components is also important since the optimization problems \eqref{eq_final_form_DC_min}-\eqref{eq_final_form_DC_max} are other \GR{sources} of conservativeness. Note that different DC functions provide different bounds \cite{tao1997convex}. 
Having in mind this aspect, we present in Proposition \ref{propo_quad_DC_fun} a procedure to compute \AP{the DC decomposition of a function $\varrho$}. Specifically, we combine quadratic functions of \cite{alamo_set2008} with the choice of eigenvalue of \cite{adjiman1996rigorous}.
\AP{This choice \GR{of eigenvalue} is not unique, \GR{see for instance \cite{rohn1998bounds},} but it should be made carefully since large values imply conservative results in Proposition \ref{propo_DC_prog_prob}.}

\begin{propo}[Adapted from \cite{alamo_set2008,adjiman1996rigorous}]\label{propo_quad_DC_fun}
Let $\varrho: \mathbb{R}^n \to \mathbb{R}^m$ be a function of class $\mathcal{C}^2$ in $\mathcal{P}$, and $\Box \mathcal{P} \subset \mathbb{R}^n$ be the interval hull of $\mathcal{P}$. Consider functions $\varrho^{\rm a}_i(z) = \varrho_i(z) + \varrho^{\rm b}_i(z)$ and ${\displaystyle \varrho^{\rm b}_i(z) = \frac{\tilde{\lambda}_i}{2}z^{\top}z}$, for $i = 1,\ldots,m$, where
\be
\tilde{\lambda}_i = \max \left\{0,-\breve{\lambda}_i\right\},
\ee
with $\breve{\lambda}_i \in \mathbb{R}$ being computed as in \cite[Equation (12)]{adjiman1996rigorous}, which is a lower bound for the smallest eigenvalue of interval Hessian matrix $[H_i] = (\partial^2/\partial z^2) \varrho_i\left(\Box \mathcal{P}\right)$. Then, $\varrho = \varrho^{\rm a} - \varrho^{\rm b}$ is a DC function on $\Box \mathcal{P}$.
\end{propo}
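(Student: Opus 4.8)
The plan is to verify the two conditions in Definition~\ref{defi_dc_function}, namely that $\varrho_i = \varrho^{\rm a}_i - \varrho^{\rm b}_i$ and that both $\varrho^{\rm a}_i$ and $\varrho^{\rm b}_i$ are convex on $\Box\mathcal{P}$, for each component $i = 1,\ldots,m$; the DC property of the vector-valued $\varrho$ then follows componentwise. The decomposition identity is immediate, since the definition $\varrho^{\rm a}_i(z) = \varrho_i(z) + \varrho^{\rm b}_i(z)$ rearranges to $\varrho_i = \varrho^{\rm a}_i - \varrho^{\rm b}_i$. Convexity of $\varrho^{\rm b}_i(z) = \frac{\tilde{\lambda}_i}{2}z^{\top}z$ is equally direct: its Hessian is the constant matrix $\tilde{\lambda}_i {\rm I}_n$, which is positive semidefinite because $\tilde{\lambda}_i = \max\{0,-\breve{\lambda}_i\} \geq 0$ by construction, so by the second-order characterization of convexity for $\mathcal{C}^2$ functions \cite{alamo_set2008} the claim holds regardless of the sign of $\breve{\lambda}_i$.

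The substance of the argument is the convexity of $\varrho^{\rm a}_i(z) = \varrho_i(z) + \frac{\tilde{\lambda}_i}{2}z^{\top}z$, whose Hessian at $z$ is $H_i(z) + \tilde{\lambda}_i {\rm I}_n$, where $H_i(z) \triangleq (\partial^2/\partial z^2)\varrho_i(z)$. Writing $\lambda_{\rm min}(\cdot)$ for the smallest eigenvalue of a symmetric matrix, I would combine the shift identity $\lambda_{\rm min}(H_i(z) + \tilde{\lambda}_i {\rm I}_n) = \lambda_{\rm min}(H_i(z)) + \tilde{\lambda}_i$ with the uniform lower bound $\lambda_{\rm min}(H_i(z)) \geq \breve{\lambda}_i$ valid for all $z \in \Box\mathcal{P}$. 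Splitting on the two cases that define $\tilde{\lambda}_i$: if $\breve{\lambda}_i \geq 0$ then $\tilde{\lambda}_i = 0$ and $H_i(z) \succeq \breve{\lambda}_i {\rm I}_n \succeq 0$, so $\varrho_i$ is already convex; if $\breve{\lambda}_i < 0$ then $\tilde{\lambda}_i = -\breve{\lambda}_i$ and $\lambda_{\rm min}(H_i(z) + \tilde{\lambda}_i {\rm I}_n) \geq \breve{\lambda}_i - \breve{\lambda}_i = 0$. In both cases the Hessian of $\varrho^{\rm a}_i$ is positive semidefinite on all of $\Box\mathcal{P}$, hence $\varrho^{\rm a}_i$ is convex, completing the DC decomposition.

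The only delicate point is justifying the uniform eigenvalue bound $\lambda_{\rm min}(H_i(z)) \geq \breve{\lambda}_i$ over $\Box\mathcal{P}$. This rests on the inclusion property of interval arithmetic: since $[H_i] = (\partial^2/\partial z^2)\varrho_i(\Box\mathcal{P})$ is an interval evaluation of the Hessian, it encloses every pointwise Hessian $H_i(z)$ with $z \in \Box\mathcal{P}$, so the lower bound $\breve{\lambda}_i$ from \cite[Equation~(12)]{adjiman1996rigorous}, which bounds the smallest eigenvalue over all matrices in $[H_i]$, \emph{a fortiori} bounds $\lambda_{\rm min}(H_i(z))$ below for each such $z$. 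The hypothesis that $\varrho$ is $\mathcal{C}^2$ guarantees that $H_i(z)$ exists and is symmetric throughout $\Box\mathcal{P}$, so the eigenvalue statements are well posed; once this enclosure is accepted, the remainder is the routine case analysis above.
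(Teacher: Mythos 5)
Your proof is correct and takes essentially the same route the paper relies on (the paper's final text omits an explicit proof, deferring to the $\alpha$BB-style argument of the cited references): second-order characterization of convexity, the uniform bound $\lambda_{\rm min}(H_i(z)) \ge \breve{\lambda}_i$ on $\Box\mathcal{P}$ from the interval-Hessian enclosure, and the eigenvalue shift under adding $\tilde{\lambda}_i {\rm I}_n$. The case split on the sign of $\breve{\lambda}_i$ and the justification of the enclosure via the inclusion property of interval arithmetic are exactly the points the paper's argument rests on, so nothing is missing.
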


\begin{rema}\label{rema_improved_quad_function}
Each function $\varrho^{\rm b}_i$ could be defined with $\frac{1}{2}z^{\top}Q_iz$, where $Q_i$ is a \AP{diagonal matrix whose elements \GR{could} be obtained by generalizing Proposition \ref{propo_quad_DC_fun} via semidefinite programming.}
\AP{
Moreover, instead of convexifying $\varrho$ (obtaining $\varrho^{\rm a}$), we could convexify $-\varrho$ and place this result in $\varrho^{\rm b}$. 
}
\end{rema}

\section{The Novel State Estimator}\label{sec_CZDC}

Next, we present the novel set-membership filter based on CZs and DC programming, called CZDC. This algorithm solves the problem formulated in Section \ref{sec_problem} in five steps. The general idea is to firstly linearize models. Then, operations \eqref{eq_aff_transf_cz}-\eqref{eq_intersection_cz} are performed on the linearized models. Finally, DC programming is employed to bound the linearization error. 

In order to obtain linearization error enclosures $\mathcal{R}$, we present Lemma \ref{lemm_lin_remainder_DC}. For practical reasons, the input CZ $\mathcal{Z}$ may be outer approximated by either a box $\Box \mathcal{Z}$ (Proposition \ref{propo_int_hull_cz}) or a parallelotope $\mathcal{T}$ (Proposition \ref{propo_partope}), yielding the desired polytope $\mathcal{P}$, before solving the problems \eqref{eq_DC_solve_fa}-\eqref{eq_DC_solve_fb}.

\begin{lemm}[Adapted from \cite{alamo_set2008}]\label{lemm_lin_remainder_DC}
Let $\varrho: \mathbb{R}^n \to \mathbb{R}^m$ be a function 
in the CZ $\mathcal{Z}$ whose first-order expansion is given by
\be
\label{eq_linearization_varrho}
\bar{\varrho}(z) = \varrho\left(\bar{z}\right) + F\left(z - \bar{z}\right),
\ee
where $F = \nabla_z\varrho(\bar{z})$, $\bar{z} \in \mathcal{P}$ is any punctual estimate, and $\mathcal{P} \supseteq \mathcal{Z}$ is a convex polytope \GR{(either a box or a parallelotope for computational reasons)}. Let $e(z) \triangleq \varrho(z) - \bar{\varrho}(z)$ be the linearization error. Let also $\varrho^{\rm a}$ and $\varrho^{\rm b}$ be convex functions such that $\varrho = \varrho^{\rm a} - \varrho^{\rm b}$ is DC on $\mathcal{P}$. Finally, let $\left(\varrho^{\rm a} - \bar{\varrho}^{\rm b} - \bar{\varrho}\right)$ be a convex majorant of $e$, and let $\left(\bar{\varrho}^{\rm a} - \varrho^{\rm b} - \bar{\varrho}\right)$ be a concave minorant of $e$. 
Then, according to Proposition \ref{propo_DC_prog_prob}, a linearization enclosure $\mathcal{R} = \left[e^-,e^+\right] \ni e$ is given by
\begin{align}
\label{eq_DC_solve_fa}
e^-_i &= \min_{z \in {\rm vert}\left(\mathcal{P}\right)} \left(\bar{\varrho}^{\rm a}_i\left(z\right) - \varrho^{\rm b}_i\left(z\right) - \bar{\varrho}_i\left(z\right) \right),\\
\label{eq_DC_solve_fb}
e^+_i &= \max_{z \in {\rm vert}\left(\mathcal{P}\right)} \left(\varrho^{\rm a}_i\left(z\right) - \bar{\varrho}^{\rm b}_i\left(z\right) - \bar{\varrho}_i\left(z\right) \right),
\end{align}
for $i = 1,\ldots,m$. Once \eqref{eq_DC_solve_fa}-\eqref{eq_DC_solve_fb} are solved, the intervals are expressed in affine arithmetic as the zonotope $\mathcal{R} = \left\{G^{\rm e},c^{\rm e}\right\}$ with
\begin{align}
\label{eq_Gzx_ef}
G^{\rm e} &= {\rm diag}\left({\rm rad}\left([e^-,e^+]\right)\right),\\
\label{eq_hat_ef}
c^{\rm e} &= {\rm mid}\left([e^-,e^+]\right).
\end{align}
\end{lemm}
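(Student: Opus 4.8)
The plan is to treat the linearization error $e = \varrho - \bar{\varrho}$ as itself a DC function and then invoke Proposition \ref{propo_DC_prog_prob} with $e$ playing the role of $\varrho$. First I would substitute the DC decomposition into the definition of $e$, writing $e(z) = \varrho^{\rm a}(z) - \varrho^{\rm b}(z) - \bar{\varrho}(z)$. Because the first-order expansion $\bar{\varrho}$ in \eqref{eq_linearization_varrho} is affine, it is simultaneously convex and concave, so it can be absorbed into either DC component without destroying convexity. This yields a DC decomposition of $e$ on $\mathcal{P}$ whose convex components are $\varrho^{\rm a}$ and $\varrho^{\rm b} + \bar{\varrho}$, the affine term having been moved into the subtracted component.

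Next I would establish the two one-sided bounds hypothesized in the statement. By Definition \ref{defi_linear_minorant}, the linear minorant satisfies $\varrho^{\rm s}(z) \ge \bar{\varrho}^{\rm s}(z)$ for ${\rm s} \in \{{\rm a}, {\rm b}\}$ and all $z \in \mathcal{P}$. Subtracting from $e$ gives, on the one hand, $(\varrho^{\rm a} - \bar{\varrho}^{\rm b} - \bar{\varrho}) - e = \varrho^{\rm b} - \bar{\varrho}^{\rm b} \ge 0$, so $\varrho^{\rm a} - \bar{\varrho}^{\rm b} - \bar{\varrho}$ is an upper bound of $e$; being the sum of the convex $\varrho^{\rm a}$ and two affine terms, it is convex, i.e. a convex majorant. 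On the other hand, $(\bar{\varrho}^{\rm a} - \varrho^{\rm b} - \bar{\varrho}) - e = \bar{\varrho}^{\rm a} - \varrho^{\rm a} \le 0$, so $\bar{\varrho}^{\rm a} - \varrho^{\rm b} - \bar{\varrho}$ is a lower bound of $e$; being affine minus convex minus affine, it is concave, i.e. a concave minorant.

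Finally I would exploit the fact that a convex function attains its maximum over a polytope at a vertex, while a concave function attains its minimum at a vertex. Applying this componentwise ($i = 1, \ldots, m$) to the convex majorant and the concave minorant produces exactly $e^+_i$ in \eqref{eq_DC_solve_fb} and $e^-_i$ in \eqref{eq_DC_solve_fa}, which is precisely the content of Proposition \ref{propo_DC_prog_prob} specialized to $e$. Since $\mathcal{Z} \subseteq \mathcal{P}$, the componentwise bounds $e^-_i \le e_i(z) \le e^+_i$ then hold on $\mathcal{Z}$, establishing $\mathcal{R} = [e^-, e^+] \ni e$; the zonotope representation follows from the standard interval-to-affine-arithmetic conversion, giving $G^{\rm e}$ and $c^{\rm e}$ as in \eqref{eq_Gzx_ef}--\eqref{eq_hat_ef}. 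The only delicate point is the sign bookkeeping, namely ensuring that the affine $\bar{\varrho}$ lands in the correct DC component and that the minorant inequalities of Definition \ref{defi_linear_minorant} point in the right direction, after which the result is a direct corollary of Proposition \ref{propo_DC_prog_prob}.
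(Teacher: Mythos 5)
Your proof is correct and follows essentially the same route as the paper, which simply defers to the proof of Lemma 1 in Alamo et al.\ (the cited DC-programming reference): decompose $e=\varrho^{\rm a}-(\varrho^{\rm b}+\bar{\varrho})$, verify via the minorant inequalities $\varrho^{\rm s}\ge\bar{\varrho}^{\rm s}$ that the stated convex majorant and concave minorant indeed bound $e$, and evaluate them at the vertices of the compact polytope $\mathcal{P}\supseteq\mathcal{Z}$ as in Proposition \ref{propo_DC_prog_prob}. Your version is in fact more self-contained than the paper's one-line citation, since you explicitly check the majorant/minorant properties that the lemma statement only asserts.
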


\begin{proof}
This proof is similar to \cite[Proof of Lemma 1]{alamo_set2008}, with the difference being that $\mathcal{Z}$ is a CZ (instead of zonotope), 
and $\varrho$ is any function 
in $\mathcal{P}$.
\end{proof}



In the following, the results to execute a loop of CZDC are presented. We emphasize that Theorem \ref{theo_forecast} extends \cite[Theorem 1]{alamo_set2008} by introducing computations with CZs and deterministic input vector.

\begin{theo}[Forecast Step]\label{theo_forecast}
Consider the CZs $\mathcal{X}_{k-1} \subset \mathbb{R}^n$ and $\mathcal{W}_{k-1} \subset \mathbb{R}^q$, and the deterministic input $u_{k-1} \in \mathbb{R}^p$. Let $f:\mathbb{R}^n \times \mathbb{R}^p \times \mathbb{R}^q \to \mathbb{R}^n$ \eqref{eq_process_model} be rewritten as $\varrho^{\rm f}: \mathbb{R}^{(n+p+q)} \to \mathbb{R}^n$ using the augmented vector $z_{k-1} = \bb x^{\top}_{k-1} & u^{\top}_{k-1} & w^{\top}_{k-1} \eb^{\top}$. Let also $\varrho^{\rm f} = \varrho^{\rm fa} - \varrho^{\rm fb}$ be DC on the polytope $\mathcal{P}_{k-1} \supseteq \mathcal{Z}_{k-1} = \mathcal{X}_{k-1} \times u_{k-1} \times \mathcal{W}_{k-1}$. Finally, let $\mathcal{R}_{k-1} \subset \mathbb{R}^n$ be the set returned by Lemma \ref{lemm_lin_remainder_DC} to compensate the linearization error of $\varrho^{\rm f}$ for a given punctual estimate $\bar{z} \in \mathcal{P}_{k-1}$. Then, the exact image $\varrho^{\rm f}\left(\mathcal{Z}_{k-1}\right)$ is outer approximated by the CZ
\be
\mathcal{X}_{k|k-1} = \left(\varrho^{\rm f}\left(\bar{z}\right) - F^{\rm x}\bar{x} - F^{\rm w}\bar{w} \right) \oplus F^{\rm x}\mathcal{X}_{k-1} \oplus F^{\rm w}\mathcal{W}_{k-1} \oplus \mathcal{R}_{k-1},
\ee
with $F^{\rm x} = \nabla_x\varrho^{\rm f}\left(\bar{z}\right)$ and $F^{\rm w} = \nabla_w\varrho^{\rm f}\left(\bar{z}\right)$ being Jacobian matrices evaluated at $\bar{z} = \bb \bar{x}^{\top} & u^{\top}_{k-1} & \bar{w}^{\top} \eb^{\top}$.
\end{theo}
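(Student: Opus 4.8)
The plan is to reduce the forecast step to the affine case by enclosing the linearization error with Lemma \ref{lemm_lin_remainder_DC}, and then to propagate the resulting affine-plus-error expression exactly through the explicit CZ operations \eqref{eq_aff_transf_cz}--\eqref{eq_sum_cz}. First I would fix an arbitrary point $z = \bb x^{\top} & u^{\top}_{k-1} & w^{\top} \eb^{\top} \in \mathcal{Z}_{k-1}$, so that $x \in \mathcal{X}_{k-1}$ and $w \in \mathcal{W}_{k-1}$. Writing the first-order expansion \eqref{eq_linearization_varrho} of $\varrho^{\rm f}$ about $\bar{z}$ together with its linearization error $e(z) = \varrho^{\rm f}(z) - \bar{\varrho}^{\rm f}(z)$, I obtain $\varrho^{\rm f}(z) = \varrho^{\rm f}(\bar{z}) + F(z - \bar{z}) + e(z)$. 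Because $\mathcal{Z}_{k-1} \subseteq \mathcal{P}_{k-1}$ and $\varrho^{\rm f} = \varrho^{\rm fa} - \varrho^{\rm fb}$ is DC on $\mathcal{P}_{k-1}$, Lemma \ref{lemm_lin_remainder_DC} guarantees $e(z) \in \mathcal{R}_{k-1}$ for every such $z$.

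Second I would exploit the block structure of the Jacobian. Partitioning $F = \nabla_z\varrho^{\rm f}(\bar{z})$ conformally with $z$ into $\bb F^{\rm x} & F^{\rm u} & F^{\rm w} \eb$, the increment splits as $F(z - \bar{z}) = F^{\rm x}(x - \bar{x}) + F^{\rm u}(u_{k-1} - u_{k-1}) + F^{\rm w}(w - \bar{w})$. The middle block vanishes identically, since the input component of both $z$ and $\bar{z}$ is the same deterministic vector $u_{k-1}$; this is exactly the point at which the deterministic-input extension over \cite[Theorem 1]{alamo_set2008} enters. Substituting and regrouping the constant terms yields the pointwise identity $\varrho^{\rm f}(z) = \left(\varrho^{\rm f}(\bar{z}) - F^{\rm x}\bar{x} - F^{\rm w}\bar{w}\right) + F^{\rm x}x + F^{\rm w}w + e(z)$.

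Third I would convert this identity into a set inclusion. As $x$ ranges over $\mathcal{X}_{k-1}$, the term $F^{\rm x}x$ lies in the CZ $F^{\rm x}\mathcal{X}_{k-1}$ by the affine-map formula \eqref{eq_aff_transf_cz}; likewise $F^{\rm w}w \in F^{\rm w}\mathcal{W}_{k-1}$; and $e(z) \in \mathcal{R}_{k-1}$. Hence each $\varrho^{\rm f}(z)$ is the constant vector plus one element drawn from each of these three sets, so it belongs to their Minkowski sum translated by that constant, which is precisely $\mathcal{X}_{k|k-1}$ after applying \eqref{eq_sum_cz}. Taking the union over all $z \in \mathcal{Z}_{k-1}$ gives $\varrho^{\rm f}(\mathcal{Z}_{k-1}) \subseteq \mathcal{X}_{k|k-1}$, while the explicit formulas \eqref{eq_aff_transf_cz}--\eqref{eq_sum_cz} certify that $\mathcal{X}_{k|k-1}$ is itself a CZ.

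The main obstacle --- indeed the only delicate point --- is the over-approximation incurred in passing to the Minkowski sum: in the exact image the error $e(z)$ is functionally coupled to the same $(x,w)$ that produces $F^{\rm x}x + F^{\rm w}w$, whereas the Minkowski sum lets $e$ vary independently over $\mathcal{R}_{k-1}$. I would emphasise that this decoupling is what renders the result an outer approximation ($\supseteq$) rather than an equality, and that soundness is immediate because any realizable triple $(x, w, e(z))$ remains admissible in the decoupled sum. Everything else is bookkeeping with the CZ operation formulas, and no estimate beyond Lemma \ref{lemm_lin_remainder_DC} is required.
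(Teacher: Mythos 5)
Your proof is correct and follows exactly the argument that the paper invokes by reference to \cite[Proof of Theorem 1]{alamo_set2008}: first-order expansion about $\bar{z}$, enclosure of the linearization error via Lemma \ref{lemm_lin_remainder_DC}, cancellation of the deterministic-input block $F^{\rm u}(u_{k-1}-u_{k-1})$, and decoupling into a Minkowski sum of CZs via \eqref{eq_aff_transf_cz}--\eqref{eq_sum_cz}. You have simply written out in full what the paper leaves implicit, including the correct observation that the independence of $e(z)$ from $(x,w)$ in the Minkowski sum is the sole source of over-approximation.
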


\begin{proof}
This proof is similar to \cite[Proof of Theorem 1]{alamo_set2008}, with the difference being the propagation of CZs instead of zonotopes.
\end{proof}


\begin{theo}[Data-Assimilation Step]\label{theo_data_ass}
Consider the CZs $\mathcal{X}_{k|k-1} \subset \mathbb{R}^n$ and $\mathcal{V}_k \subset \mathbb{R}^r$, and  the measured output $y_k \in \mathbb{R}^m$. Let $h:\mathbb{R}^n \times \mathbb{R}^r \to \mathbb{R}^m$ \eqref{eq_measurement_model} be rewritten as $\varrho^{\rm h}: \mathbb{R}^{(n+r)} \to \mathbb{R}^m$ using the augmented vector $z_k = \bb x^{\top}_k & v^{\top}_k \eb^{\top}$. Let also $\varrho^{\rm h} = \varrho^{\rm ha} - \varrho^{\rm hb}$ be DC on the polytope $\mathcal{P}_k \supseteq \mathcal{Z}_k = \mathcal{X}_{k|k-1} \times \mathcal{V}_k$. Finally, let $\mathcal{R}_k \subset \mathbb{R}^m$ be the set returned by Lemma \ref{lemm_lin_remainder_DC} to compensate the linearization error of $\varrho^{\rm h}$ for a given punctual estimate $\bar{z} \in \mathcal{P}_k$. Then, the exact set $\left\{x_k \in \mathcal{X}_{k|k-1}: y_k = h\left(x_k,v_k\right),v_k \in \mathcal{V}_k\right\}$ is over approximated by the CZ
\be
\breve{\mathcal{X}}_k = \mathcal{X}_{k|k-1} \cap_{H^{\rm x}} \mathcal{Y}_k,
\ee
where 
$\mathcal{Y}_k = \left(y_k - \varrho^{\rm h}(\bar{z}) + H\bar{z}\right) \oplus \left(-H^{\rm v}\mathcal{V}_k\right) \oplus \left(-\mathcal{R}_k\right)$, with $H = \bb H^{\rm x} & H^{\rm v} \eb$, $H^{\rm x} = \nabla_x\varrho^{\rm h}(\bar{z})$, and $H^{\rm v} = \nabla_v\varrho^{\rm h}(\bar{z})$.
\end{theo}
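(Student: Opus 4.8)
The plan is to prove the claimed outer approximation by a direct set inclusion: take an arbitrary $x_k$ in the exact set on the left and verify that it necessarily lies in $\breve{\mathcal{X}}_k$. First I would recall from \eqref{eq_intersection_cz} that the generalized intersection $\mathcal{X}_{k|k-1} \cap_{H^{\rm x}} \mathcal{Y}_k$ is precisely the set $\{x \in \mathcal{X}_{k|k-1} : H^{\rm x} x \in \mathcal{Y}_k\}$. Hence it suffices to show that every such $x_k$ satisfies both $x_k \in \mathcal{X}_{k|k-1}$ (immediate) and $H^{\rm x} x_k \in \mathcal{Y}_k$.

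Next I would unpack membership in the exact set. If $x_k$ belongs to it, there exists $v_k \in \mathcal{V}_k$ with $y_k = h(x_k, v_k) = \varrho^{\rm h}(z_k)$, where $z_k = \bb x^{\top}_k & v^{\top}_k \eb^{\top}$ and I write $\bar{z} = \bb \bar{x}^{\top} & \bar{v}^{\top} \eb^{\top}$ for the corresponding block decomposition of the expansion point. Since $x_k \in \mathcal{X}_{k|k-1}$ and $v_k \in \mathcal{V}_k$, the augmented point satisfies $z_k \in \mathcal{Z}_k \subseteq \mathcal{P}_k$, so Lemma \ref{lemm_lin_remainder_DC} applies and the linearization error $e(z_k) = \varrho^{\rm h}(z_k) - \bar{\varrho}^{\rm h}(z_k)$ lies in $\mathcal{R}_k$.

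The central computation would then substitute the first-order expansion $\varrho^{\rm h}(z_k) = \varrho^{\rm h}(\bar{z}) + H(z_k - \bar{z}) + e(z_k)$ into $y_k = \varrho^{\rm h}(z_k)$, split $H(z_k - \bar{z}) = H^{\rm x}(x_k - \bar{x}) + H^{\rm v}(v_k - \bar{v})$ using $H = \bb H^{\rm x} & H^{\rm v} \eb$, and solve for $H^{\rm x} x_k$. Using $H^{\rm x}\bar{x} + H^{\rm v}\bar{v} = H\bar{z}$, this rearranges to
\be
H^{\rm x} x_k = \left(y_k - \varrho^{\rm h}(\bar{z}) + H\bar{z}\right) - H^{\rm v} v_k - e(z_k).
\ee
I would then recognize the three summands on the right as exactly the Minkowski building blocks of $\mathcal{Y}_k$: a constant vector, a point $-H^{\rm v} v_k \in -H^{\rm v}\mathcal{V}_k$, and a point $-e(z_k) \in -\mathcal{R}_k$. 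Therefore $H^{\rm x} x_k \in \mathcal{Y}_k$, which closes the inclusion and proves that $\breve{\mathcal{X}}_k$ contains the exact data-assimilation set.

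The step I expect to be the main obstacle is bookkeeping rather than analysis: correctly tracking the partition of the augmented coordinate $z_k$ into its $x$- and $v$-blocks and the matching split of the Jacobian $H$, so that the generalized intersection is taken with respect to $H^{\rm x}$ while the measurement-noise term $-H^{\rm v}\mathcal{V}_k$ is absorbed into $\mathcal{Y}_k$ together with the constant offset and the error enclosure. I would also make explicit why only an outer (not exact) approximation results: the coupling between $x_k$ and $v_k$ carried inside $e(z_k)$ is discarded once the error is replaced by the independent enclosure $\mathcal{R}_k$ and added as a separate Minkowski term. Since only the outer inclusion is needed, this loss of coupling is harmless, and the argument runs parallel to \cite[Proof of Theorem 1]{alamo_set2008} with zonotopes replaced by CZs and with the measurement-induced intersection playing the role that the forward image plays in the forecast step.
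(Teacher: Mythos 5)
Your proposal is correct and follows essentially the same route as the paper's proof: linearize $\varrho^{\rm h}$ at $\bar{z}$ with the error enclosed in $\mathcal{R}_k$ via Lemma~\ref{lemm_lin_remainder_DC}, isolate $H^{\rm x}x_k = y_k - \varrho^{\rm h}(\bar{z}) + H\bar{z} - H^{\rm v}v_k - e^{\rm h}_k$, recognize the right-hand side as the Minkowski sum defining $\mathcal{Y}_k$, and conclude with the generalized intersection \eqref{eq_intersection_cz}. Your added remarks on the element-wise inclusion direction and on why the enclosure is outer rather than exact (the discarded coupling between $x_k$ and $v_k$ inside $e(z_k)$) are consistent elaborations of the same argument.
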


\begin{proof}
Let
\bes
y_k = \varrho^{\rm h}(\bar{z}) + H\left(z_k - \bar{z}\right) + e^{\rm h}_k
\ees
be the analytical linearization of the DC function $\varrho^{\rm h} = \varrho^{\rm ha} - \varrho^{\rm hb}$ on $\mathcal{P}_k$, and let the CZ $\mathcal{R}_k \ni e^{\rm h}_k$ be the linearization error enclosure given by Lemma \ref{lemm_lin_remainder_DC}.
By making explicit the term $H^{\rm x}x_k$ from $Hz_k = \bb H^{\rm x} & H^{\rm v} \eb \bb x_k \\ v_k \eb$, we obtain $H^{\rm x}x_k = y_k - \varrho^{\rm h}(\bar{z}) + H\bar{z} - H^{\rm v}v_k - e^{\rm h}_k$, which implies the CZ
$\mathcal{Y}_k = \left(y_k - \varrho^{\rm h}(\bar{z}) + H\bar{z}\right) \oplus \left(-H^{\rm v}\mathcal{V}_k\right) \oplus \left(-\mathcal{R}_k\right)$.
Then, we employ the generalized intersection \eqref{eq_intersection_cz} to match $\mathcal{X}_{k|k-1}$ with $\mathcal{Y}_k$, yielding $\breve{\mathcal{X}}_k$.
\end{proof}


\begin{rema}
If functions $f$ and $h$ are affine in the noise terms $w_{k-1}$ and $v_k$, respectively, then these terms are canceled during the computation of $\mathcal{R}$ in Theorems \ref{theo_forecast} and \ref{theo_data_ass}. It means that, instead of $2^{(n+q)}$ and $2^{(n+r)}$ vertices, we need to process $2^n$ vertices only.
\end{rema}

Since the consistency step is a direct consequence from Theorem \ref{theo_data_ass}, it is next presented as a corollary.

\begin{coro}[Consistency Step]\label{coro_consistency}
Consider the CZ $\breve{\mathcal{X}}_k \subset \mathbb{R}^n$ (Theorem \ref{theo_data_ass}) and the feasible set $\mathcal{X}^{\rm F}_k \subset \mathbb{R}^n$. Let $\check{\mathcal{X}}_k = \breve{\mathcal{X}}_k \cap_{{\rm I}_n} \mathcal{X}^{\rm F}_k$ be the admissible set (admissibility step in Section \ref{sec_problem}). Let $g:\mathbb{R}^n \to \mathbb{R}^{m_c}$ \eqref{eq_equality_constraints} be rewritten as $g = g^{\rm a} - g^{\rm b}$, where $g^{\rm a}$ and $g^{\rm b}$ are convex functions in the polytope $\check{\mathcal{P}}_k \supseteq \check{\mathcal{X}}_k$. Let also $\mathcal{R}_k \subset \mathbb{R}^{m_c}$ be the set returned by Lemma \ref{lemm_lin_remainder_DC} to compensate the linearization error of $g$ for a given punctual estimate $\bar{x} \in \check{\mathcal{P}}_k$. Then, the exact set $\left\{x_k \in \check{\mathcal{X}}_k: g\left(x_k\right) = 0_{m_c \times 1}\right\}$ is over approximated by the CZ 
\be
\tilde{\mathcal{X}}_k = \check{\mathcal{X}}_k \cap_{H} \mathcal{C}_k,
\ee
where $H = \nabla_xg(\bar{x})$ and 
$\mathcal{C}_k = \left(-g(\bar{x}) + H\bar{x}\right) \oplus \left(-\mathcal{R}_k\right).$
\end{coro}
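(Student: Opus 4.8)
The plan is to mirror the proof of Theorem \ref{theo_data_ass}, treating the equality constraint $g(x_k) = 0_{m_c \times 1}$ as a noise-free measurement equation in which the observed output is identically zero. First I would write the analytical linearization of the DC function $g = g^{\rm a} - g^{\rm b}$ about the punctual estimate $\bar{x} \in \check{\mathcal{P}}_k$, namely $g(x_k) = g(\bar{x}) + H(x_k - \bar{x}) + e^{\rm g}_k$ with $H = \nabla_x g(\bar{x})$, and invoke Lemma \ref{lemm_lin_remainder_DC} to obtain the linearization enclosure $\mathcal{R}_k \ni e^{\rm g}_k$ valid on $\check{\mathcal{P}}_k$. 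Note that the admissibility set $\check{\mathcal{X}}_k = \breve{\mathcal{X}}_k \cap_{{\rm I}_n} \mathcal{X}^{\rm F}_k$ is already obtained by the exact generalized intersection \eqref{eq_intersection_cz}, so no approximation is incurred there.

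Next, I would impose $g(x_k) = 0_{m_c \times 1}$ and isolate the linear term $H x_k$, which gives $H x_k = -g(\bar{x}) + H\bar{x} - e^{\rm g}_k$. Since $-e^{\rm g}_k \in -\mathcal{R}_k$, this identifies the CZ $\mathcal{C}_k = \left(-g(\bar{x}) + H\bar{x}\right) \oplus \left(-\mathcal{R}_k\right)$ as the admissible range of $H x_k$. The final step is to match $\check{\mathcal{X}}_k$ with $\mathcal{C}_k$ through the generalized intersection \eqref{eq_intersection_cz}, producing $\tilde{\mathcal{X}}_k = \check{\mathcal{X}}_k \cap_H \mathcal{C}_k$.

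The crux of the argument is the over-approximation claim. For any $x_k$ in the exact set $\left\{x_k \in \check{\mathcal{X}}_k : g(x_k) = 0_{m_c \times 1}\right\}$, the true linearization error $e^{\rm g}_k = g(x_k) - g(\bar{x}) - H(x_k - \bar{x})$ lies in $\mathcal{R}_k$, because $x_k \in \check{\mathcal{X}}_k \subseteq \check{\mathcal{P}}_k$ and Lemma \ref{lemm_lin_remainder_DC} bounds the error over the whole polytope $\check{\mathcal{P}}_k$. Enforcing $g(x_k) = 0_{m_c \times 1}$ then yields $H x_k = -g(\bar{x}) + H\bar{x} - e^{\rm g}_k \in \mathcal{C}_k$; combined with $x_k \in \check{\mathcal{X}}_k$, the semantics of the generalized intersection gives $x_k \in \check{\mathcal{X}}_k \cap_H \mathcal{C}_k = \tilde{\mathcal{X}}_k$. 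As $x_k$ was arbitrary, the exact set is contained in $\tilde{\mathcal{X}}_k$.

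I expect the only genuine subtlety, rather than an obstacle, to be verifying that $\mathcal{R}_k$ remains a valid error enclosure over all of $\check{\mathcal{X}}_k$: this hinges on the hypothesis $\check{\mathcal{P}}_k \supseteq \check{\mathcal{X}}_k$ together with the vertex-evaluation bounds \eqref{eq_DC_solve_fa}--\eqref{eq_DC_solve_fb} from Lemma \ref{lemm_lin_remainder_DC}. Everything else follows from the exactness of the generalized intersection \eqref{eq_intersection_cz}, which ensures that no additional conservatism is introduced in the matching step; the result is therefore a direct corollary of Theorem \ref{theo_data_ass}, obtained by setting the output to zero and dropping the measurement-noise term.
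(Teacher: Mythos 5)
Your proposal is correct and follows exactly the route the paper takes: the paper's proof of Corollary \ref{coro_consistency} simply states that it mirrors the proof of Theorem \ref{theo_data_ass} with $y_k$, $h$, and $z_k$ replaced by $0_{m_c \times 1}$, $g$, and $x_k$, which is precisely the substitution you carry out. Your write-up merely makes explicit the containment argument that the paper leaves implicit.
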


\begin{proof}
This proof is similar to the proof of Theorem \ref{theo_data_ass}, whose difference is the replacement of $y_k$, $h$, and $z_k$ by $0_{m_c \times 1}$, $g$, and $x_k$, respectively.
\end{proof}




We summarize the steps of CZDC in Algorithm \ref{algo_CZDC}. 

\begin{algorithm}[!ht]
\SetAlgoLined

    \caption{$\mathcal{X}_k = {\rm CZDC} \big(f,f^{\rm a},f^{\rm b},\mathcal{X}_{k-1},u_{k-1},\mathcal{W}_{k-1},$
    $y_k,h,h^{\rm a},h^{\rm b},\mathcal{V}_k,g,g^{\rm a},g^{\rm b},\mathcal{X}^{\rm F}_k,\varphi_c,\varphi_g \big)$}
   
    \label{algo_CZDC}
    
    Apply Theorem \ref{theo_forecast} to obtain the CZ $\mathcal{X}_{k|k-1}$
    \\
    Apply Theorem \ref{theo_data_ass} to obtain the CZ $\breve{\mathcal{X}}_k$
    \\
    Compute $\check{\mathcal{X}}_k = \breve{\mathcal{X}}_k \cap_{{\rm I}_n} \mathcal{X}^{\rm F}_k$
    \\
    Apply Corollary \ref{coro_consistency} to obtain the CZ $\tilde{\mathcal{X}}_k$
    \\
    Apply the algorithm proposed in \cite{scott2016} to reduce the number of constraints $n_h$ and generators $n_g$ of $\tilde{\mathcal{X}}_k$ to $\varphi_c$ and $\varphi_g$, respectively, yielding the CZ $\mathcal{X}_k$
    
\end{algorithm}

\subsection{Complexity Analysis}

The worst-case computational complexity $O(\cdot)$ for each step of CZDC (Algorithm \ref{algo_CZDC}) is shown in Table \ref{tab_complex_analysis_ref}. Such complexities were derived using basic operations among CZs \cite{rego2020guaranteed}.
Regarding the forecast, data assimilation, and consistency steps, the complexity order to obtain the linearization point $\bar{z}$ is not included since it depends on the employed methodology. As in \cite{rego2020guaranteed}, we also assume that the evaluation of nonlinear functions has complexity $O(1)$.
In the second column of Table \ref{tab_complex_analysis_ref}, the cubic term between parenthesis refers to either the computation of parallelotope via linear programs or to the Hausdorff distance minimization (order reduction). The term $2^{\tilde{n}}$ is related to either the computation of vertices or the DC programming problems \eqref{eq_DC_solve_fa}-\eqref{eq_DC_solve_fb}.
In turn, the third column of Table \ref{tab_complex_analysis_ref} presents the amount of constraints and generators for the state CZ $\mathcal{X}$ over the different steps.

Table \ref{tab_complex_analysis_ref} makes the following assumptions: $\mathcal{X}_{k-1} = \left\{G^{\rm x}_{k-1},c^{\rm x}_{k-1},A^{\rm x}_{k-1},b^{\rm x}_{k-1}\right\} \subset \mathbb{R}^n$, $\mathcal{W}_{k-1} = \left\{G^{\rm w}_{k-1},c^{\rm w}_{k-1},A^{\rm w}_{k-1},b^{\rm w}_{k-1}\right\} \subset \mathbb{R}^q$, $\mathcal{V}_k = \left\{G^{\rm v}_k,c^{\rm v}_k,A^{\rm v}_k,b^{\rm v}_k\right\} \subset \mathbb{R}^r$, and $\mathcal{X}^{\rm F}_k = \left\{G^{\rm x^F}_k,c^{\rm x^F}_k,A^{\rm x^F}_k,b^{\rm x^F}_k\right\} \subset \mathbb{R}^n$, where $G^{\rm x}_{k-1} \in \mathbb{R}^{n \times n_g}$, $G^{\rm w}_{k-1} \in \mathbb{R}^{q \times n^{\rm w}_g}$, $G^{\rm v}_k \in \mathbb{R}^{r \times n^{\rm v}_g}$, $G^{\rm x^F}_k \in \mathbb{R}^{n \times n^{\rm x^F}_g}$, $c^{\rm x}_{k-1} \in \mathbb{R}^n$, $c^{\rm w}_{k-1} \in \mathbb{R}^q$, $c^{\rm v}_k \in \mathbb{R}^r$, $c^{\rm x^F}_k \in \mathbb{R}^n$, $A^{\rm x}_{k-1} \in \mathbb{R}^{n_h \times n_g}$, $A^{\rm w}_{k-1} \in \mathbb{R}^{n^{\rm w}_h \times n^{\rm w}_g}$, $A^{\rm v}_k \in \mathbb{R}^{n^{\rm v}_h \times n^{\rm v}_g}$, $A^{\rm x^F}_k \in \mathbb{R}^{n^{\rm x^F}_h \times n^{\rm x^F}_g}$, $b^{\rm x}_{k-1} \in \mathbb{R}^{n_h}$, $b^{\rm w}_{k-1} \in \mathbb{R}^{n^{\rm w}_h}$, $b^{\rm v}_k \in \mathbb{R}^{n^{\rm v}_h}$, and $b^{\rm x^F}_k \in \mathbb{R}^{n^{\rm x^F}_h}$.
These sets are evaluated over the functions $f:\mathbb{R}^n \times \mathbb{R}^p \times \mathbb{R}^q \to \mathbb{R}^n$, $h:\mathbb{R}^n \times \mathbb{R}^r \to \mathbb{R}^m$, and $g: \mathbb{R}^n \to \mathbb{R}^{m_c}$, considering the vectors $u_{k-1} \in \mathbb{R}^p$ and $y_k \in \mathbb{R}^m$.
At the end of an iteration of CZDC, the desired CZ $\mathcal{X}_k$ is returned with $\varphi_c$ constraints and $\varphi_g$ generators.


\begin{rema}\label{rema_order_comparison}
According to Table \ref{tab_complex_analysis_ref}, the output CZs obtained by Theorems \ref{theo_forecast}-\ref{theo_data_ass} and Corollary \ref{coro_consistency} have smaller number of constraints and generators than those pointed out by Remarks 5, 7, and 10 from \cite{rego2021set}. Exceptionally, the number of constraints for $\mathcal{X}_{k|k-1}$ coincides with the value indicated in \cite[Remark 5]{rego2021set} for the CZMV algorithm.
\end{rema}

\begin{table*}[t]\small
	\centering
	\caption{Complexity order of the forecast, data assimilation, admissibility, consistency, and reduction steps from CZDC using Proposition \ref{propo_partope}.}
	\label{tab_complex_analysis_ref}
	\begin{tabular}{l|c|c}
	\hline
	Step & $O(\cdot)$ & Definition \\ \hline
    Forecast & $\tilde{n}_h\left(\tilde{n}_h + \tilde{n}_g\right)^3 + \tilde{n}\left(\tilde{n} + \tilde{n}_g\right)\left(\tilde{n} + \tilde{n}_h + \tilde{n}_g\right)^3 + \tilde{n}^22^{\tilde{n}}$ & $\tilde{n} = n + q$, $\tilde{n}_h = n_h + n^{\rm w}_h$, $\tilde{n}_g = n_g + n^{\rm w}_g$
    \\ \hline
    \multirow{2}{*}{Data Assimilation} & $\tilde{n}_h\left(\tilde{n}_h + \tilde{n}_g\right)^3 + \tilde{n}\left(\tilde{n} + \tilde{n}_g\right)\left(\tilde{n} + \tilde{n}_h + \tilde{n}_g\right)^3$ & \multirow{2}{*}{$\tilde{n} = n + r$, $\tilde{n}_h = n_h + n^{\rm w}_h + n^{\rm v}_h$, $\tilde{n}_g = n_g + n^{\rm w}_g + n + n^{\rm v}_g$} \\ & $+ \left(\tilde{n}^2 + m\right)2^{\tilde{n}} + mrn^{\rm v}_g + mn\left(\tilde{n}_g - n^{\rm v}_g\right) + m\tilde{n} + m^2$ &
    \\ \hline
    Admissibility & $n^2\tilde{n}_g + nn^{\rm x^F}_g$ & $\tilde{n}_g = n_g + n^{\rm w}_g + n + n^{\rm v}_g + m$
    \\ \hline
    \multirow{2}{*}{Consistency} & \multirow{2}{*}{$\tilde{n}_h\left(\tilde{n}_h + \tilde{n}_g\right)^3 + n\left(n + \tilde{n}_g\right)\left(n + \tilde{n}_h + \tilde{n}_g\right)^3 + \left(n^2 + m_c\right)2^n + m_cn\tilde{n}_g + m^2_c$} & $\tilde{n}_h = n_h + n^{\rm w}_h + n^{\rm v}_h + m + n^{\rm x^F}_h + n$ \\
    & & $\tilde{n}_g = n_g + n^{\rm w}_g + n +  n^{\rm v}_g + m + n^{\rm x^F}_g$
    \\ \hline
    \multirow{3}{*}{Reduction} & \multirow{3}{*}{$k_c\left(\tilde{n}_h + \tilde{n}_g\right)^3 + k_cn\tilde{n}^2_g + \left(n + \varphi_c\right)^2(\tilde{n}_g - k_c) + k_g(\tilde{n}_g - k_c)\left(n + \varphi_c\right)$} & $k_c = \tilde{n}_h - \varphi_c$, $k_g = \tilde{n}_g - k_c - \varphi_g$, \\ 
    & & $\tilde{n}_h = n_h + n^{\rm w}_h + n^{\rm v}_h + m + n^{\rm x^F}_h + n + m_c$, \\
    & & $\tilde{n}_g = n_g + n^{\rm w}_g + n +  n^{\rm v}_g + m + n^{\rm x^F}_g + m_c$ \\ \hline
	\end{tabular}
\end{table*}
\section{Numerical Results}\label{sec_numerical_examples}

In this section, CZDC is experimented over two case studies. For comparison purposes, we also implement the state-of-the-art algorithms proposed in \cite{rego2021set}, called CZFO (based on Taylor expansion) and CZMV (based on mean value extension). 
To yield punctual estimates $\bar{z}$, and thereby, to approximate the nonlinear models, we make the following choices: 
CZDC uses the center of the polytope $\mathcal{P}$ associated to Lemma \ref{lemm_lin_remainder_DC}, \AP{where $\mathcal{P}$ is a box or a parallelotope}, whose procedure is $O(1)$; 
CZFO is run with metric \cite[$C3$]{rego2021set} to minimize the diameter of an interval matrix; 
CZMV is run with metric \cite[$C2$]{rego2021set} to minimize the diameter of an interval vector.
Two performance indexes are computed, namely: (i) the {\em mean processing time} ($T^{\rm CPU}$), given by $\displaystyle T^{\rm CPU} \triangleq \frac{1}{m_s}\frac{1}{k_f}\sum_{j=1}^{m_s}\sum_{k=1}^{k_f}t_{k,j}$, where $k_f \in \mathbb{N}$ is the number of time steps, $m_s \in \mathbb{N}$ is the number of Monte Carlo simulations, and $t_{k,j}$ is the time to execute the $k$th iteration of a given algorithm in the $j$th Monte Carlo simulation; and (ii) 
\AP{the \emph{average area ratio of box} ($A^{\Box}$), given by $$\displaystyle A^{\Box} \triangleq \frac{1}{m_s}\frac{1}{k_f}\sum_{j=1}^{m_s}\sum_{k=1}^{k_f} \prod^{n}_{i=1} {\rm diam}\left([x]_{i,k,j}\right),$$
with ${\rm diam}([x]) = 2{\rm rad}([x])$.}
The noise terms $w_{k-1}$ and $v_k$ are taken from uniform distributions defined in $\mathcal{W}_{k-1}$ and $\mathcal{V}_k$\AP{, while the initial state $x_0$ belongs to the initial set $\mathcal{X}_0$.}
The following computer configuration was used: 8 GB RAM 1333 MHz, Windows 10 Pro, and AMD FX-6300 CPU 3.50 GHz. All implementations were executed in MATLAB 9.11 with INTLAB 12 \cite{Ru99a}, MPT3 \cite{MPT3}, and Gurobi 9.1.

Since the measurement $y_0$ is available, all three algorithms execute a first loop with $\mathcal{X}_{k|k-1} = \mathcal{X}_0$, whose goal is to improve the precision of the starting set $\mathcal{X}_0$. Soon after, the state estimators are normally executed. For all examples, CZFO employs order reduction with fixed values $\varphi_c$ and $\varphi_g$ at the end of each step, as recommended in \cite{rego2021set}. 

\subsection{Two-State Nonlinear Process}\label{subsec_first_exam}

Consider the nonlinear uncertain system \cite{alamo_set2008}
\begin{align}
\label{eq_process_alamo2008}
x_k &= \bb -0.7x_{2,k-1} + 0.1x^2_{2,k-1} + 0.1x_{1,k-1}x_{2,k-1} + 0.1\exp\left(x_{1,k-1}\right) \\ x_{1,k-1} + x_{2,k-1} - 0.1x^2_{1,k-1} + 0.2x_{1,k-1}x_{2,k-1} \eb \nonumber \\
&+ w_{k-1},\\
y_k &= x_{1,k} + x_{2,k} + v_k,
\end{align}
where $w_{k-1} \in \mathcal{W} = \left\{0.1{\rm I}_2,0_{2 \times 1}\right\}$ and $v_k \in \mathcal{V} = \left\{0.2,0\right\}$. To simulate this system, we set $x_0 = \bb 1 & 1\eb^{\top} \in \mathcal{X}_0 = \left\{3 \times {\rm I}_2,0_{2 \times 1}\right\}$, $k_f = 40$, and $m_s = 100$.
This example aims at illustrating that CZDC is a promising option to substitute the use of CZFO and CZMV whenever the wrapping and dependency effects imply divergence of estimates, and that CZDC reaches a better precision than the zonotopic filter based on DC programming (ZDC) proposed in \cite{alamo_set2008}. 
\AP{To reduce order of CZs, we set $\varphi_c = 3$ and $\varphi_g = 8$. This latter value is also used to reduce order of zonotopes in ZDC with Method 4 of \cite{yang2018comparison}.}
To improve both the computational efficiency and the precision of the minimum-volume zonotopes computed in ZDC, we employ \cite[VM3]{alesi2022CSM} and \cite[Definition 8]{bravo_bounded2006}. 
Motivated by \cite{alamo_set2008}, we propose the DC function $x_k = f^{\rm a} - f^{\rm b} + w_{k-1}$ such that
\begin{align*}
f^{\rm a} &= \bb 0.1x^2_{1,k-1} + 0.1x_{1,k-1}x_{2,k-1} + 0.1x^2_{2,k-1} + \AP{0.1\exp\left(x_{1,k-1}\right)} \\  0.1x^2_{2,k-1} + x_{1,k-1} + x_{2,k-1} \eb,\\
f^{\rm b} &= \bb 0.1x^2_{1,k-1} + 0.7x_{2,k-1} \\ 0.1x^2_{1,k-1} + 0.1x^2_{2,k-1} - 0.2x_{1,k-1}x_{2,k-1} \eb.
\end{align*}
Since DC functions were directly defined, \AP{Proposition \ref{propo_quad_DC_fun} was not employed, and thereby, the polytope $\mathcal{P}_{k-1}$ associated to Theorem \ref{theo_forecast} is a parallelotope (given by Proposition \ref{propo_partope})} that contains the CZ $\mathcal{Z}_{k-1} = \mathcal{X}_{k-1}$.

In Figure \ref{fig_estimation_alamo2008_ref}(a), we point out that both CZFO and CZMV diverge due to the direct usage of interval arithmetic. Although this interval extension was used to experiment CZFO and CZMV in \cite{rego2021set}, it is not enough to reach convergence in this case study. 

Differently, both ZDC and CZDC achieve convergent solutions because DC programming involves evaluation of elementary functions rather than inclusion functions. In Figure \ref{fig_estimation_alamo2008_ref}(b) and (c), one-dimensional intervals are sketched to illustrate that those algorithms provide guaranteed solutions. As shown in Table \ref{tab_alamo2008_ref}, CZDC provides a significantly better precision than ZDC at the cost of a larger $T^{\rm CPU}$.

\begin{figure}[!tb]
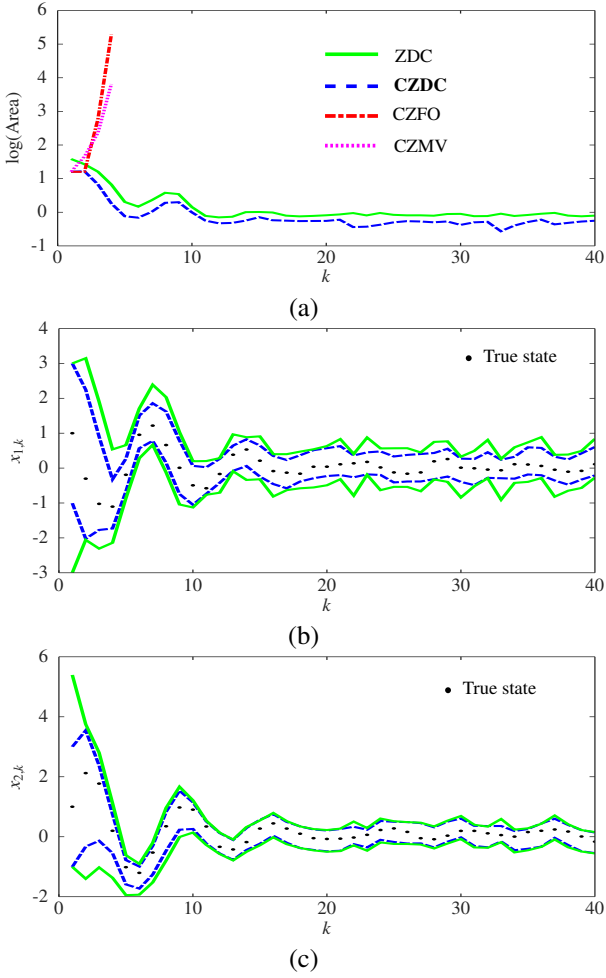

	\center
	\def\svgwidth{8cm}
	{\scriptsize\import{figures/}{fig_area_alamo2008_czdc.pdf_tex}}
	\\
	(a)\\
	\def\svgwidth{8cm}
	{\scriptsize\import{figures/}{fig_x1_alamo2008_czdc.pdf_tex}}\\
	(b)\\
	\def\svgwidth{8cm}
	{\scriptsize\import{figures/}{fig_x2_alamo2008_czdc.pdf_tex}}\\
	(c)
	\caption{State estimation for the first case study (Subsection \ref{subsec_first_exam}). Graph (a) depicts the time evolution of the \AP{area} of boxes computed by ZDC, CZDC, CZFO, and CZMV. In (b) and (c), true states are involved by interval hulls of CZs computed by ZDC and CZDC.}
	\label{fig_estimation_alamo2008_ref}
\end{figure}

\begin{table}[t]
	\centering
	\caption{Results of $T^{\rm CPU}$ and \AP{$A^{\Box}$} for the first example (Subsection \ref{subsec_first_exam}).}
	\label{tab_alamo2008_ref}
	\begin{tabular}{c|c|c}
	\hline
    Indexes & ZDC & CZDC \\ \hline
    $T^{\rm CPU}$ & {\bf 7.90 ms} & 12.6 ms \AP{($\uparrow$59.5\%)} \\
	\AP{$A^{\Box}$} & 3.62 & {\bf 1.75} \AP{($\downarrow$51.7\%)} \\ \hline
	\end{tabular}
\end{table}

\subsection{Attitude Estimation}\label{subsec_second_exam}

Now, we show the application of CZDC to a more challenging and technological example, containing multiplicative process noise, nonlinear measurements, and state equality  constraints. The considered system concerns the attitude estimation of a flying robot. By employing quaternion representation, the attitude is expressed as $x_k \in \mathbb{R}^4$ such that $\left|\left|x_k\right|\right|^2_2 = 1$. These states evolve at discrete time according to \cite{rego2021set,teixeira2009state}
\be
\label{eq_process_attitude}
x_k = \bigg(\cos\left(p\left(\check{u}_{k-1}\right)\right){\rm I}_4 - \frac{T_s}{2}\frac{\sin\left(p\left(\check{u}_{k-1}\right)\right)}{p\left(\check{u}_{k-1}\right)}\Omega\left(\check{u}_{k-1}\right)\bigg)x_{k-1},
\ee
where $T_s = 0.2~ {\rm s}$ is the sampling time, $p\left(\check{u}_k\right) = \frac{T_s}{2}\left|\left|\check{u}_k\right|\right|_2$, $\Omega\left(\check{u}_k\right) = \bb 0 & \check{u}_{3,k} & -\check{u}_{2,k} & \check{u}_{1,k} \\ -\check{u}_{3,k} & 0 & \check{u}_{1,k} & \check{u}_{2,k} \\ \check{u}_{2,k} & -\check{u}_{1,k} & 0 & \check{u}_{3,k} \\ -\check{u}_{1,k} & -\check{u}_{2,k} & -\check{u}_{3,k} & 0 \eb$, and $\check{u}_k = \bb 0.3\sin\left(\left(2\pi/12\right)kT_s\right) \\ 0.3\sin\left(\left(2\pi/12\right)kT_s - 6\right) \\ 0.3\sin\left(\left(2\pi/12\right)kT_s - 12\right) \eb$ is the physical input that drives the actual system. For state-estimation purposes, we assume that $\check{u}_k$ is acquired by gyroscopes. Then, $\check{u}_k$ is corrupted by an additive noise $w_k \in \mathcal{W} = \left\{3 \times 10^{-3}{\rm I}_3,0_{3 \times 1}\right\}$, whose result is the known signal $u_k = \check{u}_k + w_k$.
The measurement is given by
\be
\label{eq_measurement_attitude}
y_k = \bb C\left(x_k\right)r^{[1]} \\ C\left(x_k\right)r^{[2]} \eb + v_k,
\ee
where $r^{[1]} = \bb 1 & 0 & 0 \eb^{\top}$, $r^{[2]} = \bb 0 & 1 & 0 \eb^{\top}$,
\bes
C\left(x_k\right) =
\!\begin{aligned}
&
\left[\begin{matrix}
x^2_{1,k} - x^2_{2,k} - x^2_{3,k} + x^2_{4,k} & 2\left(x_{1,k}x_{2,k} + x_{3,k}x_{4,k}\right) \\
2\left(x_{1,k}x_{2,k} - x_{3,k}x_{4,k}\right) & -x^2_{1,k} + x^2_{2,k} - x^2_{3,k} + x^2_{4,k} \\
2\left(x_{1,k}x_{3,k} + x_{2,k}x_{4,k}\right) & 2\left(-x_{1,k}x_{4,k} + x_{2,k}x_{3,k}\right)
\end{matrix}\right.\\
&\qquad \qquad \qquad \qquad \qquad \quad
\left.\begin{matrix}
2\left(x_{1,k}x_{3,k} - x_{2,k}x_{4,k}\right) \\
2\left(x_{1,k}x_{4,k} + x_{2,k}x_{3,k}\right) \\
-x^2_{1,k} - x^2_{2,k} + x^2_{3,k} + x^2_{4,k}
\end{matrix}\right]
\end{aligned}
\ees
is a rotation matrix, and $v_k \in \mathcal{V} = \left\{0.15{\rm I}_6,0_{6 \times 1}\right\}$.

To simulate the system, we consider the uncorrupted signal $\check{u}_k$, initial state $x_0 = \bb 0 & 1 & 0 & 0 \eb^{\top} \in \mathcal{X}_0 = \left\{0.18{\rm I}_4,\bb 0.1 & 0.9 & 0.1 & 0.1 \eb^{\top}\right\}$, realizations of uniform noise defined in $\mathcal{V}$ for $v_k$, $k_f = 200$, and $m_s = 5$. \AP{To estimate states, we consider the corrupted signal $u_k$, fixed values $\varphi_c = 10$ and $\varphi_g = 30$, the invariant $g\left(x_k\right) = x^{\top}_kx_k - 1$, and the feasible set $\mathcal{X}^{\rm F} = \left\{{\rm I}_4,0_{4 \times 1}\right\}$.} Since $\check{u}_k$ is truly unknown, the algorithms replace $\check{u}_k$ by $(u_k - w_k)$. 
\AP{Due to the nonlinearity of $x_k = f\left(x_{k-1},u_{k-1},w_{k-1}\right)$ in \eqref{eq_process_attitude}, Proposition \ref{propo_quad_DC_fun} is employed to yield DC functions $f = f^{\rm a} - f^{\rm b}$ over each time step. In this case, the polytope $\mathcal{P}_{k-1}$ related to Theorem \ref{theo_forecast} is a box (given by Proposition \ref{propo_int_hull_cz}) that contains the CZ $\mathcal{Z}_{k-1} = \mathcal{X}_{k-1} \times u_{k-1} \times \mathcal{W}_{k-1}$.
By exploiting the quadratic nature of both $y_k = h\left(x_k\right) + v_k$ in \eqref{eq_measurement_attitude} and $g\left(x_k\right) = 0$, we propose the DC functions $y_k = h^{\rm a} - h^{\rm b} + v_k$ and $g = g^{\rm a} - g^{\rm b}$ such that
\begin{align*}
h^{\rm a} &= \bb x^2_{1,k} + x^2_{4,k} \\ x_{1,k}x_{2,k} - x_{3,k}x_{4,k} \\ x_{1,k}x_{3,k} + x_{2,k}x_{4,k} \\ x_{1,k}x_{2,k} + x_{3,k}x_{4,k} \\ x^2_{2,k} + x^2_{4,k} \\ x_{2,k}x_{3,k} - x_{1,k}x_{4,k} \eb,\quad h^{\rm b} = \bb x^2_{2,k} + x^2_{3,k} \\ -x_{1,k}x_{2,k} + x_{3,k}x_{4,k} \\ -x_{1,k}x_{3,k} - x_{2,k}x_{4,k} \\ -x_{1,k}x_{2,k} - x_{3,k}x_{4,k} \\ x^2_{1,k} + x^2_{3,k} \\ x_{1,k}x_{4,k} - x_{2,k}x_{3,k} \eb, \\
g^{\rm a} &= g,\quad g^{\rm b} = 0.
\end{align*}
In this case, the polytopes $\mathcal{P}_k$ and $\check{\mathcal{P}}_k$, in Theorem \ref{theo_data_ass} and Corollary \ref{coro_consistency}, are parallelotopes (given by Proposition \ref{propo_partope}) that contain the CZs $\mathcal{Z}_k = \mathcal{X}_{k|k-1}$ and $\check{\mathcal{Z}}_k = \check{\mathcal{X}}_k$, respectively.
}

\AP{Figure \ref{fig_estimation_attitude_ref} depicts a separate simulation with the CZDC, CZFO and CZMV algorithms. Boxes were sketched rather than CZs for computational simplicity. According to the figure, CZDC generates CZs with the smallest associated interval hulls. Moreover, a faster reduction of uncertainty is expected with CZDC during the initialization effect. Table \ref{tab_attitude_ref} corroborates the improvement of precision caused by CZDC in comparison with both CZMV and CZFO. Since CZFO is, in general, more costly than CZMV \cite[Table 1]{rego2021set}, it demands a larger $T^{\rm CPU}$ as shown in Table \ref{tab_attitude_ref}. Differently, CZDC can enhance the precision of CZMV using much less computational resource, and this advantage is related to both tight linearization remainder (Lemma \ref{lemm_lin_remainder_DC}) and low-dimension CZs (Remark \ref{rema_order_comparison}). However, the quantity of operations involved with CZDC may be larger than the CZMV one, justifying the difference of $T^{\rm CPU}$.}


\AP{In order to verify if the precision of CZDC would be enlarged with respect to Table \ref{tab_attitude_ref} (reduction of $A^{\Box}$), we also tested if convexifying each row of $f$ or $-f$, for each time step, would be better (Remark \ref{rema_improved_quad_function}), selecting the strategy with the smallest lower bound of eigenvalue. However, the tests pointed out that convexifying $f$ always yielded the best solutions.}

\AP{During the experiment execution, CZMV and CZFO diverged for some simulations, whose results were discarded and not included in the computation of $A^{\Box}$. The increase of $\varphi_c$ and $\varphi_g$ can, in principle, improve the results. However, the generator reduction can imply conservatism for some directions due to the wrapping effect. Each simulation has different noise realizations, which affect $u_k$ and $y_k$, and thereby, the intersections.}

\begin{figure*}[!tb]
	\center
	\def\svgwidth{15cm}
	{\scriptsize\import{figures/}{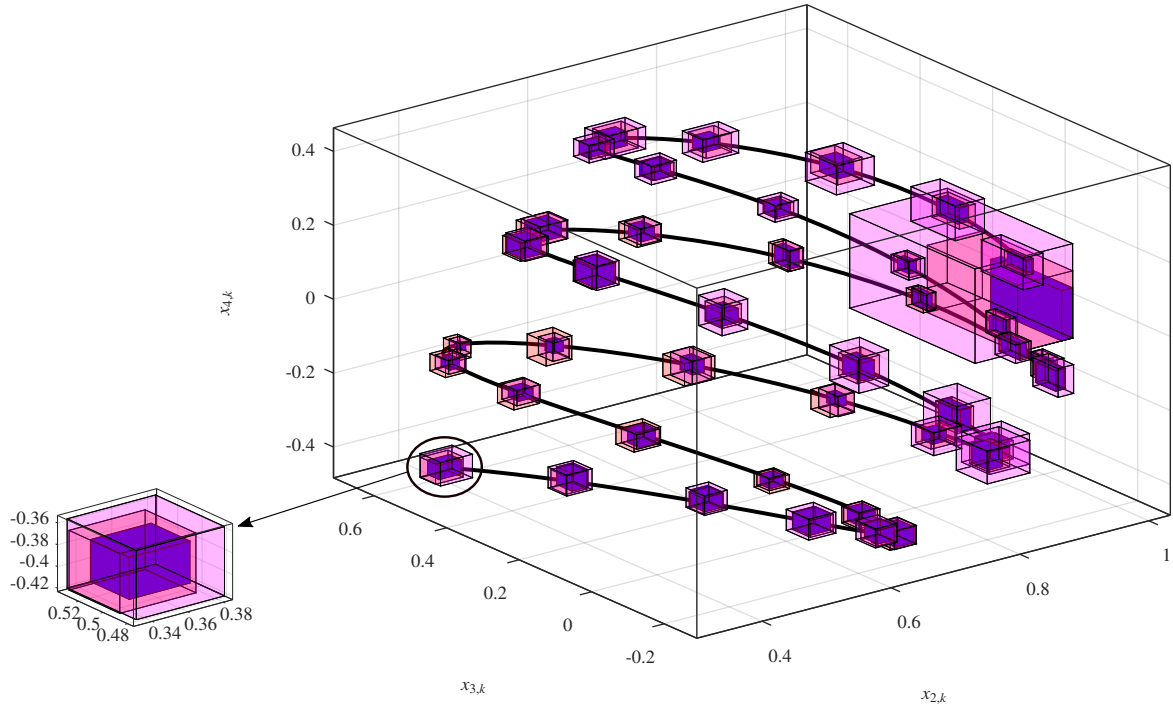}}
	\caption{\AP{3D projection with respect to $x_{1,k} = 0$ of the interval hulls of CZs, computed by CZDC (blue color), CZFO (red color), and CZMV (magenta color), for the second case study (Subsection \ref{subsec_second_exam}). The true states are sketched in black solid line.}}
	\label{fig_estimation_attitude_ref}
\end{figure*}

\begin{table}[t]
	\centering
	\caption{Results of $T^{\rm CPU}$ and \AP{$A^{\Box}$} for the second example (Subsection \ref{subsec_second_exam}). \AP{The percentage reduction of $A^{\Box}$ for CZDC and CZFO in comparison to CZMV is shown between parenthesis.}}
	\label{tab_attitude_ref}
	\begin{tabular}{c|c|c|c}
	\hline
	Indexes         & CZDC & CZFO & CZMV  \\ \hline
    $T^{\rm CPU}$   & 3.58 s & 39.8 s & {\bf 1.76} s \\
	\AP{$A^{\Box} (\times 10^{-4})$} & {\bf 0.0265} ($\downarrow$96.7\%) & 0.1714 ($\downarrow$78.4\%) & 0.7919 \\ \hline
	\end{tabular}
\end{table}
\section{Conclusions} \label{sec_conclusion}

This paper proposed a new set-membership filter for discrete-time nonlinear uncertain systems with state constraints, called CZDC.
A DC programming approach \AP{was} used to provide a new nonlinear approximation for CZs. Thus, CZDC \AP{established} an alternative estimation basis with respect to the state-of-the-art algorithms, called CZMV and CZFO \cite{rego2021set}. We showed that the performance of these two algorithms can be significantly deteriorated due to the wrapping and dependency effects, with CZDC being a good option to mitigate divergence and conservatism issues. Over two numerical examples, we discussed advantages of CZDC over CZMV and CZFO.
These three algorithms can readily enforce linear inequality constraints on the state vector by using CZs. However, the nonlinear case requires investigation and will be intended in the future.

\bibliography{my_bib_version01b}

\end{document}